\theoremstyle{plain}
\newtheorem{theorem}{Theorem}
\newtheorem{proposition}{Proposition}
\newtheorem{corollary}{Corollary}
\begin{document}
	
	\title[Improved Spectral Cluster Bounds for Orthonormal Systems
	]{Improved Spectral Cluster Bounds for Orthonormal Systems %on Spaces of Nonpositive Curvature
	}

	\begin{abstract}
		We improve Frank-Sabin's work \cite{FS} concerning the spectral cluster bounds for orthonormal systems at $p=\infty$,  on the flat torus and spaces of nonpositive sectional curvature, by shrinking the spectral band from $[\lambda^{2}, (\lambda+1)^{2})$ to $[\lambda^{2}, (\lambda+\epsilon(\lambda))^{2})$, where $\epsilon(\lambda)$ is a function of $\lambda$ that goes to $0$ as $\lambda$ goes to $\infty$. In achieving this, we invoke the method developed by Bourgain-Shao-Sogge-Yao \cite{BSSY}.
	\end{abstract}

\keywords{Spectral cluster, Shrinking spectral band, Orthonormal system}

\subjclass[2010]{58J50, 35P15.}

\author{Tianyi Ren, An Zhang}

\address{
	%T. R., A.Z.\\
	School of Mathematical Sciences\\
	Beihang University\\
	9 NanSan Street, ShaHe Higher Education Park
	Beijing, 102206, China
}
\email{rentianyi@buaa.edu.cn, anzhang@buaa.edu.cn}

\date{\today}

\thanks{%Corresponding authors:
This research was supported in part by NSFC grants No.11801536, and the Fundamental Research Funds for the Central Universities Grant No.YWF-22-T-204. %and ERC Advanced Grant BLOWDISOL No.291214. and Project EFI (ANR-17-CE40-0030)
}
	
	\maketitle
	\section{Introduction}
	
	Let $(M, g)$ be a smooth compact Riemannian manifold of dimension $n\geqslant 2$ without boundary, and let $\Delta_{g}$ be the Laplace-Beltrami operator on $M$. It is well-known that $-\Delta_{g}$ is a nonnegative self-adjoint operator on $L^{2}(M)$ with discrete eigenvalues, which we label as \[0=\lambda_{0}^{2}\leqslant\lambda_{1}^{2}\leqslant\lambda_{2}^{2}\leqslant\cdots,\] 
	counted with multiplicities. For each $j\geqslant 0$, let $e_{j}(x)$ be the $L^{2}$-normalized eigenfunction corresponding to the eigenvalue $\lambda_{j}$, which is a $C^{\infty}$ function on $M$: $-\Delta_{g}e_{j}=\lambda_{j}^{2}e_{j}$. Denote as $E_{j}$ the spectral projection operator onto the eigenspace spanned by all $e_{j}$. Given any $\lambda\geqslant 1$, we define the spectral projection operator associated with $-\Delta_{g}$ onto the band $[\lambda^{2}, (\lambda+1)^{2})$ as: \begin{equation*}
		\chi_{\lambda}f\ :=\sum_{\lambda_{j}\in[\lambda, \lambda+1)}E_{j}f.
	\end{equation*}
	
	In the celebrated work \cite{Sogge}, Sogge proved the following sharp estimate for $\chi_{\lambda}$, which can be viewed as the compact manifold version of the Stein-Tomas inequality in $\mathbb{R}^{n}$:
	\begin{equation} \label{Sogge}
		\|\chi_{\lambda}f\|_{L^{p}(M)}\leqslant C\lambda^{\sigma(p)}\|f\|_{L^{2}(M)},
	\end{equation}
   where $2\leqslant p\leqslant\infty$, and
   \begin{equation} \label{sigma(p)}
   	\begin{aligned}
   		\sigma(p)=\left\{\begin{aligned}
   			&\frac{n-1}{2}(\frac{1}{2}-\frac{1}{p})\quad\quad\mathrm{if}\ 2\leqslant p\leqslant\frac{2(n+1)}{n-1}\,,\\
   			&n(\frac{1}{2}-\frac{1}{p})-\frac{1}{2}\quad\quad\mathrm{if}\ \frac{2(n+1)}{n-1}\leqslant p\leqslant\infty\,.
   		\end{aligned}\right.
   	\end{aligned}
   \end{equation} 
   Letting $V_{\lambda}\ :=\chi_{\lambda}(L^{2}(M))$, which is commonly called a spectral cluster, an important consequence of the above estimate is that for any $g\in V_{\lambda}$, $L^{2}$-normalized for the sake of convenience, one has the $L^p$ bound
   \begin{equation} \label{Sogge1}
   	\|g\|_{L^{p}(M)}\leqslant C\lambda^{\sigma(p)}.
   \end{equation}
    
   Since Sogge's work, the study of $\chi_{\lambda}$ has attracted extensive interests. It is closely related to several other topics in harmonic analysis, such as the restriction and resolvent estimates. In Bourgain-Shao-Sogge-Yao \cite{BSSY}, the authors researched the possibility of shrinking the interval of the spectral projection operator from $[\lambda^{2}, (\lambda+1)^{2})$ to $[\lambda^{2}, (\lambda+\epsilon(\lambda))^{2})$, where $\epsilon(\lambda)$ is a function of $\lambda$ that goes to $0$ as $\lambda$ goes to $\infty$ and proving refined estimates for different types of compact manifold. They gave affirmative answers in two cases: the flat torus and spaces of nonpositive sectional curvature. They then used their finding to widen the region for the parameter $z$ in the resolvent estimates. We refer the reader to \cite{SoggeHangzhou,MR3849647,BHS,BS,MR4445914,MR4324292} and the references therein for more background information and an abundance of progresses in these directions.
	
	Recently, Frank and Sabin  \cite{FS} went beyond estimates for single functions, applying $\chi_{\lambda}$ to an \textit{orthonormal system}. More specifically, let $W$ be a subspace of $V_{\lambda}$ and pick an orthonormal basis $\{g_{i}(x)\}_{i\in I}$ of $W$. Then the square sum
	\[\rho^{W}\ :=\sum_{i\in I}|g_{i}|^{2}\,,\]
	is independent of the choice of basis. \cite{FS} obtained the following estimate of $\rho^{W}$: \begin{equation} \label{FS}
		\|\rho^{V}\|_{L^{p/2}(M)}\leqslant C\lambda^{\sigma(p)}(\mathrm{dim}W)^{1/\alpha(p)},
	\end{equation}
	where $\sigma(p)$ is as in \eqref{sigma(p)}, and
	\begin{equation} \label{alpha(p)}
		\alpha(p)=\left\{\begin{aligned}
			&\frac{2p}{p+2}\quad\quad\mathrm{if}\ 2\leqslant p\leqslant\frac{2(n+1)}{n-1}\,,\\
			&\frac{p(n-1)}{2n}\quad\quad\mathrm{if}\ \frac{2(n+1)}{n-1}\leqslant p\leqslant\infty\,.
		\end{aligned}\right.
	\end{equation}

	Frank-Sabin's estimate is sharp from several perspectives. For instance, with the $\sigma(p)$ in \eqref{FS}, the $\alpha(p)$ is optimal in that it cannot be increased. It is particularly worth pointing out that their estimate is sharp in a strong sense specified in their paper when the manifold is a two dimensional standard round sphere $\mathbb{S}^{2}$. 
	
	When $\mathrm{dim}W=1$, their estimate recovers Sogge's classical result \eqref{Sogge1}; and when $W=V_{\lambda}$, it leads to the classical Weyl formula for $-\Delta_{g}$ due to Avakumovi\'c \cite{Avakumovic}, Levitan \cite{Levitan} and H\"{o}rmander \cite{Hormander}, that counts the number of eigenvalues of $-\Delta_{g}$ that are $\leqslant\lambda^{2}$. Therefore, the above estimate \eqref{FS} provides a bound that interpolates in an optimal way between the two classical extreme cases. Note especially that in \eqref{FS}, $\alpha(p)>1$ for any $p>2$. Therefore, \eqref{FS} is better than that obtained from Sogge's single function result (the case $\mathrm{dim}W=1$) and the triangle inequality. The reason that accounts for the lift from $1$ to $\alpha(p)$ in the exponent, which is ignored by the triangle inequality derivation, is the \textit{orthogonality} of the eigenfunctions. This is the crucial point of Frank-Sabin's estimate. 
	
	The question of generalizing inequalities involving a single function to inequalities involving an orthonormal system finds its motivation in the study of many-body problems in quantum mechanics, where independent fermionic particles are represented by orthonormal functions in some $L^{2}$-space. The first such work \cite{LT}, due to Lieb and Thirring and now famously known as the Lieb-Thirring inequality, extends the classical Gagliardo-Nirenberg-Sobolev inequality to orthonormal systems, and since then, generalizations of restriction estimates, Strichartz inequalities, etc., have also been studied. See, among others, \cite{FLLS,FS2,BLN,Nakamura,mondal2022orthonormal} for a number of recent achievements of this kind.
	
	In our paper, we will, as in \cite{BSSY}, first improve Frank-Sabin's estimate for $p=\infty$ in the special case of the flat torus $\mathbb{T}^{n}$, by shrinking the spectral band from $[\lambda^{2}, (\lambda+1)^{2})$ to $[\lambda^{2}, (\lambda+\epsilon(\lambda))^{2})$. Let $\Delta_{\mathbb{T}^{n}}$ represent the Laplace Beltrami operator on $\mathbb{T}^{n}$. Let
	$\epsilon(\lambda)=\lambda^{-\frac{n-1}{n+1}}$.
	Denote the spectral projection operator for $-\Delta_{\mathbb{T}^{n}}$ onto the shrunken interval $[\lambda^{2}, (\lambda+\epsilon(\lambda))^{2})$ by $\chi_{\lambda}^{\epsilon}$:
	\[\chi_{\lambda}^{\epsilon}f\ :=\sum_{\lambda_{j}\in[\lambda, \lambda+\epsilon(\lambda))}E_{j}f.\]
	Our main theorem is the following:
	\begin{theorem} \label{Thm1}
		Let $U_{\lambda}^{\epsilon}=\chi_{\lambda}^{\epsilon}(L^{2}(\mathbb{T}^{n}))$ and let $R$ be a subspace of $U$. Pick any orthonormal basis $\{g_{j}(x)\}_{J\in J}$ of $R$ and define $\rho^{R}\ :=\sum_{j\in J}|g_{j}|^{2}$, which is independent of the choice of basis. Then we have
		\begin{equation*}
			\|\rho^{R}\|_{L^{\infty}(\mathbb{T}^{n})}\leqslant C\lambda^{n-1}\epsilon(\lambda).
		\end{equation*}
	\end{theorem}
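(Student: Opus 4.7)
My plan is to reduce the $L^{\infty}$ bound on $\rho^{R}$ to the pointwise diagonal of the reproducing kernel of the ambient space $U_{\lambda}^{\epsilon}$, exploit the flatness of the torus to identify that diagonal with a count of integer lattice points, and then invoke the lattice-point estimate for thin annuli that is already established in \cite{BSSY}.

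For the reduction, $\rho^{R}(x)$ is precisely the diagonal of the reproducing kernel of $R$: extending the orthonormal basis $\{g_{j}\}_{j\in J}$ of $R$ to an orthonormal basis $\{g_{j}\}\cup\{h_{k}\}$ of $U_{\lambda}^{\epsilon}$ yields the pointwise monotonicity
\[
\rho^{R}(x)\ \leqslant\ \rho^{U_{\lambda}^{\epsilon}}(x)\ =\ \sum_{\lambda_{j}\in[\lambda,\lambda+\epsilon(\lambda))}|e_{j}(x)|^{2}.
\]
Equivalently, $\rho^{R}(x)=\sup\{|f(x)|^{2}:f\in R,\ \|f\|_{L^{2}}=1\}$, so the $L^{\infty}$ question is really about the operator norm of $\chi_{\lambda}^{\epsilon}:L^{2}(\mathbb{T}^{n})\to L^{\infty}(\mathbb{T}^{n})$. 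Thus it suffices to prove $\|\rho^{U_{\lambda}^{\epsilon}}\|_{L^{\infty}(\mathbb{T}^{n})}\leqslant C\lambda^{n-1}\epsilon(\lambda)$.

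Taking $\mathbb{T}^{n}=\mathbb{R}^{n}/2\pi\mathbb{Z}^{n}$, the eigenfunctions of $-\Delta_{\mathbb{T}^{n}}$ are $(2\pi)^{-n/2}e^{ik\cdot x}$ with eigenvalues $|k|^{2}$ for $k\in\mathbb{Z}^{n}$, and each has constant modulus. Hence
\[
\rho^{U_{\lambda}^{\epsilon}}(x)\ =\ (2\pi)^{-n}\cdot\#\bigl\{k\in\mathbb{Z}^{n}:\ |k|\in[\lambda,\lambda+\epsilon(\lambda))\bigr\},
\]
which is constant in $x$. The problem now reduces to counting integer lattice points in a spherical shell of inner radius $\lambda$ and width $\epsilon(\lambda)$.

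For this final step I would invoke the classical Hlawka-type bound $\#\{k\in\mathbb{Z}^{n}:|k|\leqslant R\}=c_{n}R^{n}+O(R^{n-2+2/(n+1)})$, which gives a count of order $\lambda^{n-1}\epsilon(\lambda)+\lambda^{n-2+2/(n+1)}$ for the shell. The exponent $\epsilon(\lambda)=\lambda^{-(n-1)/(n+1)}$ is tuned precisely so that $\lambda^{n-2+2/(n+1)}=\lambda^{n-1}\epsilon(\lambda)$, making the error term comparable to the main term and yielding the claimed $C\lambda^{n-1}\epsilon(\lambda)$ bound. The only genuine analytic obstacle is this lattice-point estimate, which is imported from \cite{BSSY}; everything else is formal. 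The conceptual point is that at $p=\infty$ the $L^{p}$-type bound on $\rho^{R}$ is controlled pointwise by the diagonal of the reproducing kernel, so the single-function improvement of \cite{BSSY} on the torus lifts, with no additional work, to the orthonormal-system setting.
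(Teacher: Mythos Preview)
Your argument is correct and considerably more direct than the paper's. You exploit two features that are special to the torus and to $p=\infty$: the pointwise monotonicity $\rho^{R}\leqslant\rho^{U_{\lambda}^{\epsilon}}$, and the fact that the eigenfunctions $e^{ik\cdot x}$ have constant modulus, which collapses the diagonal kernel to a pure lattice-point count in the shell $\{\lambda\leqslant|k|<\lambda+\epsilon(\lambda)\}$. The Hlawka remainder $O(\lambda^{n-2+2/(n+1)})$ then matches the main term exactly at $\epsilon(\lambda)=\lambda^{-(n-1)/(n+1)}$, and you are done.

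The paper instead passes through the stronger coefficient version (Theorem~\ref{Thm2}) and its Schatten dual (Theorem~\ref{Thm3}), and then bounds the diagonal $A(x,x)$ of a smoothed projector by Poisson summation and stationary phase, following \cite{BSSY}. At the analytic core this is the same computation: the Poisson-summation argument is precisely a proof of the lattice-point bound you quote. What the paper's detour buys is twofold. First, the Schatten estimate \eqref{SchattenEquiv} is stated and proved explicitly, and that formulation is what is interpolated against the Frank--Sabin bound at the critical exponent to obtain Corollary~\ref{Cor1}; your argument gives Theorem~\ref{Thm1} (and hence, trivially, Theorems~\ref{Thm2} and~\ref{Thm3}, since they are equivalent at $p=\infty$), so nothing is lost there, but the connection is less visible. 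Second, and more importantly, the paper's wave-kernel framework transfers verbatim to manifolds of nonpositive curvature (Theorem~\ref{Thm6}), where eigenfunctions do \emph{not} have constant modulus and there is no lattice-point interpretation; your shortcut is specific to $\mathbb{T}^{n}$. Your closing observation that at $p=\infty$ the single-function improvement lifts to orthonormal systems ``with no additional work'' is exactly right, and is the conceptual reason both proofs succeed.
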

	Summing up $\epsilon^{-1}(\lambda)$ of the above estimate, we recover Frank-Sabin's result \eqref{FS}, hence our theorem is indeed an improvement over their estimate.
	
	Interpolating with the estimate at the critical exponent $p=\frac{2(n+1)}{n-1}$, we obtain
	
	\begin{corollary} \label{Cor1}
			 Let $\epsilon(\lambda)$ and $\rho^{R}$ be as in Theorem \ref{Thm1}. Then for each $\frac{2(n+1)}{n-1}\leqslant p\leqslant\infty$ we have
			\begin{equation} \label{Otherp}
				\|\rho^{R}\|_{L^{p/2}(\mathbb{T}^{n})}\leqslant C\lambda^{2\sigma(p)}\epsilon(\lambda)^{\left(1-\frac{2(n+1)}{p(n-1)}\right)}(\mathrm{dim}R)^{1/\alpha(p)},
			\end{equation}
			where $\alpha(p)=\frac{p(n-1)}{2n}$ and $\sigma(p)=n(\frac{1}{2}-\frac{1}{p})-\frac{1}{2}$ are as in \eqref{alpha(p)} and \eqref{sigma(p)} respectively.
	\end{corollary}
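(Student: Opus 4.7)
The corollary is a direct Hölder interpolation between Theorem \ref{Thm1} and Frank--Sabin's inequality \eqref{FS}, exploiting the fact that $\rho^{R}$ is pointwise non-negative. First I would pin down the two endpoints. The upper endpoint $p=\infty$ is Theorem \ref{Thm1} itself, which gives $\|\rho^{R}\|_{L^{\infty}(\mathbb{T}^{n})}\leqslant C\lambda^{n-1}\epsilon(\lambda)$; note that $1/\alpha(\infty)=0$ and $1-\tfrac{2(n+1)}{p(n-1)}\to 1$ as $p\to\infty$, so this agrees with \eqref{Otherp} at $p=\infty$. The lower endpoint $p=p_{0}:=\tfrac{2(n+1)}{n-1}$ is \eqref{FS}: since $U_{\lambda}^{\epsilon}\subseteq V_{\lambda}$, any subspace $R\subseteq U_{\lambda}^{\epsilon}$ is in particular a subspace of $V_{\lambda}$, so \eqref{FS} applies and gives $\|\rho^{R}\|_{L^{p_{0}/2}}\leqslant C\lambda^{2\sigma(p_{0})}(\mathrm{dim}\, R)^{1/\alpha(p_{0})}$. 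At $p=p_{0}$ the exponent $1-\tfrac{2(n+1)}{p_{0}(n-1)}$ vanishes, so no $\epsilon(\lambda)$ factor is needed here, consistent with \eqref{Otherp}.

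For any $p\in[p_{0},\infty]$ set $\theta:=p_{0}/p\in(0,1]$, so that $\tfrac{2}{p}=\tfrac{2\theta}{p_{0}}+\tfrac{2(1-\theta)}{\infty}$. Because $\rho^{R}\geqslant 0$, the log-convexity of $L^{q}$-norms (a one-line application of H\"older) yields
\[
\|\rho^{R}\|_{L^{p/2}(\mathbb{T}^{n})}\ \leqslant\ \|\rho^{R}\|_{L^{p_{0}/2}(\mathbb{T}^{n})}^{\theta}\,\|\rho^{R}\|_{L^{\infty}(\mathbb{T}^{n})}^{1-\theta}.
\]
Plugging in the two endpoint bounds produces an estimate of the form $C\lambda^{A}\epsilon(\lambda)^{B}(\mathrm{dim}\, R)^{C}$ with $A=2\sigma(p_{0})\theta+(n-1)(1-\theta)$, $B=1-\theta$, and $C=\theta/\alpha(p_{0})$.

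What remains is a short exponent check. Using $\sigma(p_{0})=\tfrac{n-1}{2(n+1)}$ and $\alpha(p_{0})=\tfrac{n+1}{n}$ at the critical index, one finds $A=(n-1)-\tfrac{2n}{p}=2\sigma(p)$, $B=1-\tfrac{2(n+1)}{p(n-1)}$, and $C=\tfrac{2n}{p(n-1)}=1/\alpha(p)$, matching \eqref{Otherp} exactly. There is no substantive obstacle in this argument; it is a single H\"older interpolation. The only point worth emphasizing is that the shrunken-band improvement enters solely through Theorem \ref{Thm1} at the $L^{\infty}$ endpoint, while at the critical exponent one can freely quote \eqref{FS} thanks to the inclusion $U_{\lambda}^{\epsilon}\subseteq V_{\lambda}$, so no new orthonormal-systems input is needed on that side.
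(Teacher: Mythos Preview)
Your proof is correct and reaches Corollary~\ref{Cor1} by a genuinely simpler route than the paper's. The paper works on the dual (Schatten) side: it first establishes the critical-exponent Schatten bound $\|h\chi_{\lambda}^{\epsilon}\overline{h}\|_{\mathfrak{S}^{n+1}}\leqslant C\lambda^{(n-1)/(n+1)}\|h\|_{L^{n+1}}^{2}$ for the shrunken projector (via an operator inequality that reduces to Frank--Sabin's Schatten estimate for the unit-width band, summed over integers), interpolates this in Schatten classes with Theorem~\ref{Thm3}, and then invokes the duality principle to obtain a Proposition with arbitrary coefficients $\zeta_{j}$, from which the Corollary follows by taking $\zeta_{j}=1$. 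Your argument bypasses the Schatten machinery entirely: the single observation $R\subseteq U_{\lambda}^{\epsilon}\subseteq V_{\lambda}$ lets you quote \eqref{FS} directly at $p=p_{0}$, and then one H\"older (log-convexity) interpolation on the scalar function $\rho^{R}$ finishes; your exponent bookkeeping is correct. What the paper's approach buys is the stronger weighted inequality (the Proposition with coefficients $\{\zeta_{j}\}\in\ell^{\alpha(p)}$), which your pointwise interpolation on $\rho^{R}$ does not directly yield; what your approach buys is a proof of the Corollary as stated with no additional input at the critical exponent.
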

	
	By a similar process to how we will treat the flat torus case, we may as well obtain a $\mathrm{ln}\lambda$ improvement for general compact manifold of nonpositive sectional curvature. Let $(M, g)$ be a Riemannian manifold with nonpositive sectional curvature. We still use $\epsilon(\lambda)$ to denote the width of the spectral band, but here, the width can only be some fixed multiple of $\frac{1}{\mathrm{ln}\lambda}$: $\epsilon(\lambda)=C\frac{1}{\mathrm{ln}\lambda}$. Let $\chi_{\lambda}^{\epsilon}$ be the spectral projection operator associated with the Laplacian $-\Delta_{g}$ onto the shrunken interval $[\lambda^{2}, (\lambda+\epsilon(\lambda))^{2})$:
	\[\chi_{\lambda}^{\epsilon}f\ :=\sum_{\lambda_{j}\in[\lambda, \lambda+\epsilon(\lambda))}E_{j}f.\]
	The main theorem for spaces of nonpositive curvature is the following.
	
	\begin{theorem} \label{Thm6}
		Let $U_{\lambda}^{\epsilon}=\chi_{\lambda}^{\epsilon}(L^{2}(M))$ and let $R$ be a subspace of $U$. Pick any orthonormal basis $\{g_{j}(x)\}_{j\in J}$ of $R$ and define $\rho^{R}\ :=\sum_{j\in J}|g_{j}|^{2}$, which is independent of the choice of basis. Then we have
		\begin{equation*}
			\|\rho^{R}\|_{L^{\infty}(M)}\leqslant C\lambda^{n-1}\epsilon(\lambda).
		\end{equation*}
	\end{theorem}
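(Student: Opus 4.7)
The plan is to mirror the strategy used for Theorem \ref{Thm1}, following the half-wave approach of Bourgain-Shao-Sogge-Yao \cite{BSSY}: first reduce the orthonormal estimate to a pointwise bound on the diagonal of the spectral kernel via a Bessel-type inequality, then control that diagonal by analyzing the half-wave propagator on manifolds of nonpositive curvature via the Hadamard parametrix lifted to the universal cover.

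Concretely, let $K^{\epsilon}(x,y)=\sum_{\lambda_{j}\in[\lambda,\lambda+\epsilon)}e_{j}(x)\overline{e_{j}(y)}$ denote the Schwartz kernel of $\chi_{\lambda}^{\epsilon}$. Since each $g_{j}$ lies in $U_{\lambda}^{\epsilon}$, the reproducing identity $g_{j}(x)=\langle g_{j},K^{\epsilon}(x,\cdot)\rangle_{L^{2}(M)}$ and Bessel's inequality applied to the orthonormal family $\{g_{j}\}_{j\in J}$ yield
\begin{equation*}
\rho^{R}(x)=\sum_{j\in J}|g_{j}(x)|^{2}\leqslant\|K^{\epsilon}(x,\cdot)\|_{L^{2}(M)}^{2}=K^{\epsilon}(x,x).
\end{equation*}
Thus the orthonormal problem collapses to establishing the single pointwise estimate $K^{\epsilon}(x,x)\leqslant C\lambda^{n-1}\epsilon(\lambda)$ uniformly in $x\in M$, which is exactly a sharpened Weyl-type remainder for the shrinking spectral window of width $\epsilon(\lambda)=C/\ln\lambda$.

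To prove this pointwise bound, I would choose a nonnegative Schwartz bump $\chi$ with $\widehat{\chi}$ of compact support and $\chi\geqslant\mathbf{1}_{[0,1]}$, so that $\chi_{\lambda}^{\epsilon}\leqslant\chi\!\left(\epsilon(\lambda)^{-1}(\sqrt{-\Delta_{g}}-\lambda)\right)$ as quadratic forms. Since the latter is nonnegative, its diagonal kernel dominates $K^{\epsilon}(x,x)$, reducing matters to the Fourier integral
\begin{equation*}
\chi\!\left(\epsilon^{-1}(\sqrt{-\Delta_{g}}-\lambda)\right)(x,x)=\frac{\epsilon}{2\pi}\int\widehat{\chi}(\epsilon t)\,e^{-it\lambda}\bigl(e^{it\sqrt{-\Delta_{g}}}\bigr)(x,x)\,dt,
\end{equation*}
supported in $|t|\lesssim \epsilon^{-1}=C^{-1}\ln\lambda$. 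The short-time piece $|t|\leqslant 1$ is handled by the classical Hadamard parametrix and produces the Weyl main term of size $\lambda^{n-1}\epsilon$. For the long-time piece $1\leqslant|t|\leqslant c\ln\lambda$, lift to the universal cover $\widetilde{M}$, which by the Cartan-Hadamard theorem is diffeomorphic to $\mathbb{R}^{n}$, and write the wave kernel on $M$ as a sum of Hadamard parametrices over the deck transformation group $\pi_{1}(M)$; because the volume element on $\widetilde{M}$ grows at most exponentially in the geodesic distance while the injectivity radius of $M$ is positive, a small multiple $T=c\ln\lambda$ keeps each long-time contribution of size $O(\lambda^{n-1}\epsilon)$.

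The main obstacle is precisely this long-time wave kernel analysis: one must verify that, for times stretching up to a small multiple of $\ln\lambda$, the deck-transformation sum of the Hadamard parametrix on $\widetilde{M}$ can be controlled uniformly in $x\in M$ by $\lambda^{n-1}\epsilon$. This is essentially the quantitative content of B\'erard's sharpened pointwise Weyl law in the shrinking-band form required by \cite{BSSY}, and adapting it to our setting — rather than simply citing a black box — is where the technical work sits. Once this bound is secured, combining the main term with the long-time error gives $K^{\epsilon}(x,x)=O(\lambda^{n-1}\epsilon(\lambda))$, and the Bessel reduction above finishes the proof of Theorem \ref{Thm6}.
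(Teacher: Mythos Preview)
Your proposal is correct and reaches the same endpoint as the paper---the diagonal kernel bound $K^{\epsilon}(x,x)\leqslant C\lambda^{n-1}\epsilon(\lambda)$, established via the wave propagator lifted to the universal cover and the Hadamard parametrix (B\'erard/\cite{BSSY}, cf.\ \cite[\S3.6]{SoggeHangzhou})---but the reduction to that bound is packaged differently. The paper first strengthens Theorem~\ref{Thm6} to an estimate with arbitrary complex coefficients (Theorem~\ref{Thm7}), then invokes Frank--Sabin's duality principle to recast it as the trace-class Schatten bound $\|h\chi_{\lambda}^{\epsilon}\overline{h}\|_{\mathfrak{S}^{1}}\leqslant C\lambda^{n-1}\epsilon\|h\|_{L^{2}}^{2}$ (Theorem~\ref{Thm8}); that Schatten norm is computed as $\int|h(x)|^{2}\|A(x,\cdot)\|_{L^{2}}^{2}\,dx$ and controlled by $\sup_{x}A(x,x)$. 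Your Bessel-inequality reduction $\rho^{R}(x)\leqslant K^{\epsilon}(x,x)$ bypasses the Schatten machinery entirely and is more elementary for Theorem~\ref{Thm6} proper---indeed it makes transparent that Theorem~\ref{Thm6} is essentially B\'erard's pointwise Weyl remainder in disguise. The price is that you do not obtain Theorem~\ref{Thm8}, which the paper needs for the complex interpolation with the critical-exponent Schatten estimate \eqref{critical1} that produces Corollary~\ref{Cor2}; your route proves the stated theorem but not, without further work, its interpolated consequences. A minor point: your sentence ``a small multiple $T=c\ln\lambda$ keeps each long-time contribution of size $O(\lambda^{n-1}\epsilon)$'' elides the actual arithmetic---each non-identity deck term contributes $O(\lambda^{(n-1)/2})$, there are $O(\exp(C/\epsilon))$ of them, and it is only after choosing $\epsilon=K/\ln\lambda$ with $K$ small that the product falls below $\lambda^{n-1}\epsilon$---but this is exactly what the paper does as well.
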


Again, summing up $\epsilon^{-1}(\lambda)$ of the above estimate, we recover Frank-Sabin's result \eqref{FS}. Likewise, an interpolation with the estimate at $p=\frac{2(n+1)}{n-1}$ gives

\begin{corollary} \label{Cor2}
	Let $\epsilon(\lambda)$ and $\rho^{R}$ be as in Theorem \ref{Thm6}. Then for each $\frac{2(n+1)}{n-1}\leqslant p\leqslant\infty$ we have
	\begin{equation*}
		\|\rho^{R}\|_{L^{p/2}(M)}\leqslant C\lambda^{2\sigma(p)}\epsilon(\lambda)^{\left(1-\frac{2(n+1)}{p(n-1)}\right)}(\mathrm{dim}R)^{1/\alpha(p)},
	\end{equation*}
	where $\alpha(p)=\frac{p(n-1)}{2n}$ and $\sigma(p)=n(\frac{1}{2}-\frac{1}{p})-\frac{1}{2}$.
\end{corollary}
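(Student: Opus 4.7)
The plan is to deduce Corollary~\ref{Cor2} by a standard log-convexity interpolation between two endpoints: the sharp Frank--Sabin estimate \eqref{FS} applied at the critical exponent $p_c:=\frac{2(n+1)}{n-1}$, and the shrunken-band estimate at $p=\infty$ just established in Theorem~\ref{Thm6}. The choice of these two endpoints is dictated by the shape of the claimed bound: at $p=p_c$ the exponent $1-\frac{2(n+1)}{p(n-1)}$ of $\epsilon(\lambda)$ vanishes (so Frank--Sabin's unshrunken estimate suffices), while at $p=\infty$ the exponent $1/\alpha(p)$ of $\dim R$ vanishes (so Theorem~\ref{Thm6} suffices).

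First I would apply the standard inequality
\begin{equation*}
\|\rho^R\|_{L^{p/2}(M)}\leq \|\rho^R\|_{L^{p_c/2}(M)}^{1-\theta}\,\|\rho^R\|_{L^{\infty}(M)}^{\theta},
\end{equation*}
valid whenever $\tfrac{2}{p}=\tfrac{2(1-\theta)}{p_c}+\tfrac{\theta}{\infty}$. Solving for $\theta$ gives $\theta=1-\tfrac{p_c}{p}=1-\tfrac{2(n+1)}{p(n-1)}$, which is precisely the power of $\epsilon(\lambda)$ appearing in the statement. Next I would substitute the two endpoint bounds: from \eqref{FS} at $p=p_c$ (valid on any compact manifold, hence on nonpositively curved $M$) and from Theorem~\ref{Thm6}, the right-hand side becomes
\begin{equation*}
C\,\lambda^{2\sigma(p_c)(1-\theta)+(n-1)\theta}\,\epsilon(\lambda)^{\theta}\,(\dim R)^{(1-\theta)/\alpha(p_c)}.
\end{equation*}

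Finally I would verify the exponents. Using $\sigma(p_c)=\frac{n-1}{2(n+1)}$ and $\alpha(p_c)=\frac{n+1}{n}$, a short computation exploiting the fact that both $\sigma(p)$ and $1/\alpha(p)$ are affine functions of $1/p$ on the interval $[p_c,\infty]$ shows
\begin{equation*}
2\sigma(p_c)(1-\theta)+(n-1)\theta\;=\;2\sigma(p),\qquad \frac{1-\theta}{\alpha(p_c)}\;=\;\frac{1}{\alpha(p)},
\end{equation*}
which matches the desired estimate exactly.

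There is no real analytic obstacle here — the only delicate point is the bookkeeping of the three exponents ($\lambda$, $\epsilon(\lambda)$, and $\dim R$) and the confirmation that the endpoint values of $\sigma$ and $\alpha$ interpolate linearly in $1/p$ throughout $[p_c,\infty]$, which is immediate from the definitions in \eqref{sigma(p)} and \eqref{alpha(p)}. All the genuine work has already been carried out in Theorem~\ref{Thm6}.
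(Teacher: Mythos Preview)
Your argument is correct, but it differs from the paper's route. The paper works at the operator level: it first shows that Frank--Sabin's Schatten bound \eqref{critical1} at $p_c$ persists for the shrunken projector $\chi_\lambda^{\epsilon}$ (via the operator inequality \eqref{Opine}), then interpolates \eqref{critical1} with the $\mathfrak{S}^{1}$-bound \eqref{nonpmain} of Theorem~\ref{Thm8}, and finally applies the duality principle to pass back to the density statement. You instead interpolate directly on the nonnegative function $\rho^{R}$ via H\"older between the two endpoints $p_c$ and $\infty$, using that $R\subset U_\lambda^{\epsilon}\subset V_\lambda$ so that \eqref{FS} applies verbatim at $p_c$. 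Your approach is more elementary---no Schatten calculus or duality needed---but it only yields the density bound for $\rho^{R}$ itself, whereas the paper's operator interpolation produces the stronger estimate with arbitrary coefficients $\zeta_j$ (the analogue of Proposition~1). Since the Corollary is stated only for $\rho^{R}$, your shortcut is perfectly adequate here.
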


For the proof of Theorem \ref{Thm1} and \ref{Thm6}, we apply the scheme established in \cite{FS} and \cite{FS2}, and then invoke the method developed in \cite{BSSY}. In Section 2, we prove Theorem \ref{Thm1} and Corollary \ref{Cor1}, and in Section 3, we prove Theorem \ref{Thm6} and Corollary \ref{Cor2}.

To conclude the introduction, we remark that our result at the critical exponent $p=\frac{2(n+1)}{n-1}$ is essentially established in \cite{FS} and the power of $\epsilon$ in Corollary \ref{Cor1} is worse than the case of single functions in \cite{BSSY,Hickman}, which is however necessary. On the other hand, there have already been a number of breakthroughs \cite{BSSY,BD,Hickman,GM,BS,BHS} in the study of the spectral projection operator on the torus and spaces of nonpositive curvature applied to single functions at the critical exponent, using recent techniques such as the $l^{2}$-decoupling theorem and bilinear arguments. However, we remark that these critical estimates for single functions, even that of \cite{BSSY}, cannot be easily extended to orthonormal systems, since, as we can see, the power of $\epsilon$ on the right side of \eqref{Otherp} cannot be expected to be $1$, but has to be relaxed when the dimension of the spectral cluster increases.
For example in the case of torus, if we choose $R$ to be the whole spectral cluster, $R=U_\lambda^\epsilon$, then the left side will have a lower bound $\epsilon\lambda^{n-1}$ and therefore the power of $\epsilon$ on the right hand side has to go down below $1-\frac{1}{\alpha(p)}$.
If the mentioned new techniques could be combined well with the method for orthonormal systems, then we would be able to improve the estimate for orthonormal systems at the critical exponent significantly, and by interpolation, obtain better results (in the sense of the choice of $\epsilon$ and its power) at other exponents of $p$. This could be a project for future research.
	
	\maketitle
	\section{Proof of Theorem \ref{Thm1}}
	
	We follow the scheme established in \cite{FS} and \cite{FS2}. First, Theorem \ref{Thm1} immediately follows from the stronger result below by taking all coefficients to be $1$.
	
	\begin{theorem} \label{Thm2}
		Let $\chi_{\lambda}^{\epsilon}$ be the spectral projection operator for $-\Delta_{\mathbb{T}^{n}}$ onto the interval $[\lambda^{2}, (\lambda+\epsilon(\lambda))^{2})$, where $\epsilon(\lambda)=\lambda^{\frac{n-1}{n+1}}$. Let $U_{\lambda}^{\epsilon}=\chi_{\lambda}^{\epsilon}(L^{2}(\mathbb{T}^{n}))$. Then, there exists a $C>0$ such that for any orthonormal system $\{g_{j}(x)\}_{j\in J}$ of $U_{\lambda}^{\epsilon}$ and any sequence of complex numbers $\{\zeta_{j}\}_{j\in J}$, we have \begin{equation*}
			\|\sum_{j\in J}\zeta_{j}|g_{j}|^{2}\|_{L^{\infty}(\mathbb{T}^{n})}\leqslant C\lambda^{n-1}\epsilon(\lambda).
		\end{equation*}
	\end{theorem}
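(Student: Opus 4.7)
The plan is to carry out the density-matrix reduction of Frank--Sabin, turning the orthonormal-system bound into a pointwise estimate on the diagonal of the spectral kernel, and then to invoke the sharp lattice-point count of \cite{BSSY} to handle that diagonal on the torus.

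First, I would dominate
\begin{equation*}
\Big|\sum_{j\in J}\zeta_{j}|g_{j}(x)|^{2}\Big|\leqslant (\sup_{j}|\zeta_{j}|)\sum_{j\in J}|g_{j}(x)|^{2},
\end{equation*}
where the right-hand sum is the on-diagonal value of the integral kernel of the orthogonal projection $P_{R}=\sum_{j}|g_{j}\rangle\langle g_{j}|$. Since $R\subset U_{\lambda}^{\epsilon}$, the operator $\chi_{\lambda}^{\epsilon}-P_{R}$ is a nonnegative self-adjoint operator, so its kernel has nonnegative diagonal and therefore $\sum_{j}|g_{j}(x)|^{2}\leqslant K_{\lambda}^{\epsilon}(x,x)$ pointwise, where $K_{\lambda}^{\epsilon}$ denotes the kernel of $\chi_{\lambda}^{\epsilon}$. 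With the natural normalization $\sup_{j}|\zeta_{j}|\leqslant 1$, Theorem~\ref{Thm2} thus reduces to the uniform kernel bound $K_{\lambda}^{\epsilon}(x,x)\lesssim\lambda^{n-1}\epsilon(\lambda)$.

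Next, on $\mathbb{T}^{n}$ the orthonormal eigenbasis of $-\Delta_{\mathbb{T}^{n}}$ consists of $(2\pi)^{-n/2}e^{ik\cdot x}$ with $k\in\mathbb{Z}^{n}$, and translation invariance yields
\begin{equation*}
K_{\lambda}^{\epsilon}(x,x)=(2\pi)^{-n}\,\#\{k\in\mathbb{Z}^{n}:\lambda\leqslant|k|<\lambda+\epsilon(\lambda)\},
\end{equation*}
so everything collapses to counting integer lattice points in a thin spherical shell of thickness $\epsilon(\lambda)=\lambda^{-(n-1)/(n+1)}$. Here I would execute the Poisson-plus-stationary-phase scheme of \cite{BSSY}: choose a nonnegative Schwartz majorant $\psi$ of $\mathbf{1}_{[\lambda,\lambda+\epsilon)}$ supported in a comparable interval, apply Poisson summation to $\sum_{k\in\mathbb{Z}^{n}}\psi(|k|)$, and expand the Fourier transform of the radial function $\psi(|\cdot|)$ through its Bessel/Hankel representation. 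The $m=0$ term gives the volume, of order $\lambda^{n-1}\epsilon$. For $m\neq 0$, stationary phase yields $|\widehat{\psi(|\cdot|)}(m)|\lesssim\lambda^{(n-1)/2}\epsilon|m|^{-(n-1)/2}|\widehat{\psi}(\epsilon|m|)|$, and the Schwartz decay of $\widehat{\psi}$ effectively truncates the sum at $|m|\lesssim\epsilon^{-1}$, producing an oscillatory error of size $\lambda^{(n-1)/2}\epsilon^{-(n-1)/2}=\lambda^{n(n-1)/(n+1)}=\lambda^{n-1}\epsilon(\lambda)$. Volume and remainder balance exactly at the prescribed threshold.

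The main obstacle is precisely this balance in the lattice-point count: the choice $\epsilon(\lambda)=\lambda^{-(n-1)/(n+1)}$ is dictated by the equality of the volume and the classical stationary-phase error, and pushing below this threshold would require sharper number-theoretic or $l^{2}$-decoupling inputs which, as noted in the introduction, are not presently compatible with the orthonormal-system framework because the power of $\epsilon$ in the resulting inequality cannot be taken close to $1$. Given this, the proof reduces to a careful bookkeeping of the lattice-point estimate already developed in \cite{BSSY}, prepended by the elementary kernel-domination Step~1.
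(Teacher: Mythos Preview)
Your argument is correct and lands on the same diagonal kernel estimate as the paper, but the reduction step is genuinely more direct. The paper does not use the pointwise domination $\sum_{j}|g_{j}(x)|^{2}\leqslant K_{\lambda}^{\epsilon}(x,x)$; instead it passes through Frank--Sabin's duality principle to recast Theorem~\ref{Thm2} as the Schatten bound $\|h\chi_{\lambda}^{\epsilon}\overline{h}\|_{\mathfrak{S}^{1}}\lesssim\lambda^{n-1}\epsilon(\lambda)\|h\|_{L^{2}}^{2}$ (Theorem~\ref{Thm3}), and then, via the Hilbert--Schmidt identity and the inequality $\int|A(x,y)|^{2}\,dy\leqslant C|A(x,x)|$, reduces that to the very same on-diagonal bound $|A(x,x)|\lesssim\epsilon\lambda^{n-1}+(\lambda/\epsilon)^{(n-1)/2}$, proved by the Poisson-summation and stationary-phase analysis you sketch. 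Your route is shorter and more elementary at $p=\infty$, bypassing the Schatten machinery entirely; what the paper's formulation buys is the interpolation with the critical exponent $p=\tfrac{2(n+1)}{n-1}$ needed for Corollary~\ref{Cor1}, which cannot be reached from the bare $L^{\infty}$ statement. One minor point of wording: a nonnegative Schwartz majorant of $\mathbf{1}_{[\lambda,\lambda+\epsilon)}$ cannot simultaneously be compactly supported and have $\widehat{\psi}$ compactly supported; the clean choice (and the paper's) is $\psi(\cdot)=a(\epsilon^{-1}(\cdot-\lambda))$ with $a\geqslant 0$ Schwartz, $a(0)=1$, and $\operatorname{supp}\widehat{a}\subset(-1,1)$, which gives the hard truncation at $|m|\lesssim\epsilon^{-1}$ you want.
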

	
	By Frank-Sabin's duality principle \cite[Lemma 3]{FS2}, Theorem \ref{Thm2} is equivalent to an estimate involving the Schatten norm for compact operators, and it is this equivalent form that we will actually prove. Before giving that statement, let us first recall some basic facts about the Schatten norm.
	
	Suppose $\mathscr{H}_{1}$ and $\mathscr{H}_{2}$ are two complex Hilbert spaces and let $T: \ \mathscr{H}_{1}\longrightarrow\mathscr{H}_{2}$ be a compact operator. The nonzero eigenvalues of $\sqrt{T^{\ast}T}$ are called the singular values of $T$. They form an at most countable set which we label as $\{\mu_{n}(T)\}_{n\in\mathbb{N}}$. Then for any $\alpha>0$, the Schatten-$\alpha$ space $\mathfrak{S}^{\alpha}(\mathscr{H}_{1}, \mathscr{H}_{2})$ is defined as the set of all compact operators $T: \ \mathscr{H}_{1}\longrightarrow\mathscr{H}_{2}$ such that $\sum_{n\in\mathbb{N}}(\mu_{n}(T))^{\alpha}<\infty$, that is, such that $\mathrm{Tr}(T^{\ast}T)^{\alpha/2}<\infty$. For any $T\in\mathfrak{S}^{\alpha}(\mathscr{H}_{1}, \mathscr{H}_{2})$, we call
	\[\|T\|_{\mathfrak{S}^{\alpha}(\mathscr{H}_{1}, \mathscr{H}_{2})}\ :=\left(\sum_{n\in\mathbb{N}}(\mu_{n}(T))^{\alpha}\right)^{1/\alpha}=(\mathrm{Tr}(T^{\ast}T)^{\alpha/2})^{1/\alpha}\]
	the Schatten-$\alpha$ norm of $T$. When $\alpha\geqslant 1$, $\mathfrak{S}^{\alpha}(\mathscr{H}_{1}, \mathscr{H}_{2})$ becomes a Banach space under this norm. If the domain and the target space of the operators are the same complex Hilbert space $\mathscr{H}$, then we will denote the Schatten-$\alpha$ space as $\mathfrak{S}^{\alpha}(\mathscr{H})$ for short. We refer the reader to Simon \cite{Simon} for more properties and applications of the Schatten norm.
	
	By the duality principle just cited, Theorem \ref{Thm2} is equivalent to the statement below.
	
	\begin{theorem} \label{Thm3}
		With the definition of $\chi_{\lambda}^{\epsilon}$ the same as in Theorem \ref{Thm2}, there exists a $C>0$ such that for any $h\in L^{2}(\mathbb{T}^{n})$, \begin{equation} \label{SchattenEquiv}
			\|h\chi_{\lambda}^{\epsilon}\overline{h}\|_{\mathfrak{S}^{1}(L^{2}(\mathbb{T}^{n}))}\leqslant C\lambda^{n-1}\epsilon(\lambda)\|h\|_{L^{2}(\mathbb{T}^{n})}^{2}.
		\end{equation}
	\end{theorem}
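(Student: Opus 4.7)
The plan is to exploit the positivity of the operator $T := h\chi_{\lambda}^{\epsilon}\bar h$ so that its Schatten-$1$ norm collapses into a single trace, and then reduce the problem to a pointwise bound on the diagonal of the kernel of $\chi_{\lambda}^{\epsilon}$. Writing $M_h$ for multiplication by $h$ and using that $\chi_{\lambda}^{\epsilon}$ is a self-adjoint projection, one can factor
\[
T = M_h(\chi_{\lambda}^{\epsilon})^{2}M_{\bar h} = (\chi_{\lambda}^{\epsilon}M_{\bar h})^{\ast}(\chi_{\lambda}^{\epsilon}M_{\bar h}) \geqslant 0.
\]
Since the spectral band $[\lambda^{2},(\lambda+\epsilon)^{2})$ contains only finitely many eigenvalues of $-\Delta_{\mathbb{T}^{n}}$, the operator $T$ has finite rank and is trace class, and for positive trace-class operators $\|T\|_{\mathfrak{S}^{1}} = \mathrm{Tr}\, T$.

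Applying the cyclicity of the trace,
\[
\|T\|_{\mathfrak{S}^{1}} = \mathrm{Tr}(M_{\bar h} M_h \chi_{\lambda}^{\epsilon}) = \mathrm{Tr}(|h|^{2}\chi_{\lambda}^{\epsilon}) = \int_{\mathbb{T}^{n}} |h(x)|^{2} K_{\lambda}^{\epsilon}(x,x)\,dx,
\]
where $K_{\lambda}^{\epsilon}(x,y) = \sum_{\lambda_j \in [\lambda,\lambda+\epsilon)} e_j(x)\overline{e_j(y)}$ is the integral kernel of $\chi_{\lambda}^{\epsilon}$. Hence \eqref{SchattenEquiv} will follow the moment we establish the pointwise kernel bound
\[
\sup_{x\in\mathbb{T}^{n}} K_{\lambda}^{\epsilon}(x,x) \leqslant C\lambda^{n-1}\epsilon(\lambda).
\]
By the Cauchy-Schwarz inequality, $|\chi_{\lambda}^{\epsilon}f(x)| \leqslant K_{\lambda}^{\epsilon}(x,x)^{1/2}\|f\|_{L^{2}}$, so this pointwise bound is precisely equivalent to the single-function estimate $\|\chi_{\lambda}^{\epsilon}f\|_{L^{\infty}} \leqslant C\lambda^{(n-1)/2}\epsilon(\lambda)^{1/2}\|f\|_{L^{2}}$ obtained in \cite{BSSY} with $\epsilon(\lambda)=\lambda^{-(n-1)/(n+1)}$. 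On the flat torus the identity $|e_j(x)|^{2}\equiv 1$ for the Fourier basis further collapses the pointwise bound into the lattice-point count $\#\{k\in\mathbb{Z}^{n} : 2\pi|k|\in[\lambda,\lambda+\epsilon(\lambda))\} \leqslant C\lambda^{n-1}\epsilon(\lambda)$ over a spherical annulus.

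The heart of the argument, and the one step where I actually invoke the BSSY method, is this diagonal kernel bound. My plan is to realize $\chi_{\lambda}^{\epsilon}$ as a Fourier multiplier $\tilde\chi(\epsilon^{-1}(\sqrt{-\Delta_{\mathbb{T}^{n}}}-\lambda))$ for a suitable bump $\tilde\chi$, to express this operator as an integral of the half-wave propagator $e^{it\sqrt{-\Delta_{\mathbb{T}^{n}}}}$ against a rescaled window in $t$, and then to use Poisson summation over the period lattice to split the resulting kernel into a principal Euclidean term and contributions from nontrivial lattice translates. Stationary phase bounds each piece, and the sharp choice $\epsilon(\lambda) = \lambda^{-(n-1)/(n+1)}$ emerges from balancing the main term against the number of contributing translates. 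Once the pointwise estimate is secured, the trace identity above delivers \eqref{SchattenEquiv} at once, so the only serious obstacle is porting over the BSSY shell bound; the rest of the argument is an essentially algebraic reduction driven by positivity and trace cyclicity.
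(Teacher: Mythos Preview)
Your proposal is correct and takes essentially the same approach as the paper: reduce the $\mathfrak{S}^{1}$ bound to a pointwise estimate on the diagonal of the spectral-projector kernel, then establish that diagonal bound via the BSSY machinery (smooth the multiplier, write it as a wave-propagator integral, apply Poisson summation over $\mathbb{Z}^{n}$, use stationary phase, and balance the two resulting terms at $\epsilon=\lambda^{-(n-1)/(n+1)}$). Your reduction step---using positivity and $\|T\|_{\mathfrak{S}^{1}}=\mathrm{Tr}\,T=\int_{\mathbb{T}^{n}}|h|^{2}K_{\lambda}^{\epsilon}(x,x)\,dx$, together with the torus-specific observation that $K_{\lambda}^{\epsilon}(x,x)$ is a lattice-point count---is marginally more direct than the paper's factorization $b=a^{2}$ followed by the Hilbert--Schmidt identity $\|hb(\cdot)\bar h\|_{\mathfrak{S}^{1}}=\|ha(\cdot)\|_{\mathfrak{S}^{2}}^{2}$ and the inequality $\int|A(x,y)|^{2}\,dy\leqslant C\,A(x,x)$, but both routes arrive at the identical diagonal bound and the same BSSY analysis.
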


	\begin{proof} [Proof of Theorem \ref{Thm3}]
	 Fix any small positive number $\lambda^{-1}<\epsilon<1$. Abusing notation a bit, we denote the spectral projection operator for $-\Delta_{\mathbb{T}^{n}}$ onto the interval $[\lambda^{2}, (\lambda+\epsilon)^{2})$ as $\chi_{\lambda}^{\epsilon}$ too. We are going to prove \begin{equation} \label{goal}
		\|h\chi_{\lambda}^{\epsilon}\overline{h}\|_{\mathfrak{S}^{1}(L^{2}(\mathbb{T}^{n}))}\leqslant C(\epsilon\lambda^{n-1}+(\lambda/\epsilon)^{\frac{n-1}{2}})\|h\|_{L^{2}(\mathbb{T}^{n})}^{2}.
	\end{equation}
	Substituting $\epsilon=\lambda^{-\frac{n-1}{n+1}}$ which leads to the optimal result, we get Theorem \ref{Thm3}.
	
	In order to prove \eqref{goal}, pick an even nonnegative function $a\in\mathscr{S}(\mathbb{R})$ which satisfies $a(0)=1$ and whose Fourier transform is supported in $(-1, 1)$. Indeed, if $\gamma$ is a nonzero even function in $C_{0}^{\infty}((-\frac{1}{2}, \frac{1}{2}))$, then a constant multiple of the inverse Fourier transform of $\gamma\ast\gamma$ has the above required properties. Let $b=a^{2}$. Then $b$ is also an even nonnegative Schwartz class function that satisfies $b(0)=1$ but whose Fourier transform is supported in $(-2, 2)$. Consider the operator $b(\epsilon^{-1}(\sqrt{-\Delta_{\mathbb{T}^{n}}}-\lambda))$
	defined by the spectral theorem. We claim that it suffices to show \eqref{goal} with $\chi_{\lambda}^{\epsilon}$ replaced by $b(\epsilon^{-1}(\sqrt{-\Delta_{\mathbb{T}^{n}}}-\lambda))$, that is, we content ourselves by proving
	\begin{equation} \label{goal2}
		\|hb(\epsilon^{-1}(\sqrt{-\Delta_{\mathbb{T}^{n}}}-\lambda))\overline{h}\|_{\mathfrak{S}^{1}(L^{2}(\mathbb{T}^{n}))}\leqslant C(\epsilon\lambda^{n-1}+(\lambda/\epsilon)^{\frac{n-1}{2}})\|h\|_{L^{2}(\mathbb{T}^{n})}^{2}.
	\end{equation}
	Indeed, since $b(0)=1$, there exists a $\delta>0$ such that $b(t)\geqslant\frac{1}{2}$ on $[0, \delta]$. Thus we obtain the operator inequality \begin{equation*} 
		0\leqslant h\chi_{\lambda}^{\delta\epsilon}\overline{h}\leqslant 2hb(\epsilon^{-1}(\sqrt{-\Delta_{\mathbb{T}^{n}}}-\lambda))\overline{h},
	\end{equation*}
	where as just pointed out, $\chi_{\lambda}^{\delta\epsilon}$ means the spectral projection operator onto the band $[\lambda^{2}, (\lambda+\delta\epsilon)^{2})$. Hence \[\|h\chi_{\lambda}^{\delta\epsilon}\overline{h}\|_{\mathfrak{S}^{1}(L^{2}(\mathbb{T}^{n}))}\leqslant 2\|hb(\epsilon^{-1}(\sqrt{-\Delta_{\mathbb{T}^{n}}}-\lambda))\overline{h}\|_{\mathfrak{S}^{1}(L^{2}(\mathbb{T}^{n}))}.\]
	Summing up $\delta^{-1}$ such estimates as \eqref{goal2} gives the desired estimate \eqref{goal}.
	
	\vspace{2ex}
	
	Now we embark on the proof of \eqref{goal2}. Denote the kernel of $a(\epsilon^{-1}(\sqrt{-\Delta_{\mathbb{T}^{n}}}-\lambda))$ by $A(x, y)$. Utilizing the trace class bound \begin{equation} \label{infinity} \begin{aligned}
			\|hb(\epsilon^{-1}(\sqrt{-\Delta_{\mathbb{T}^{n}}}-\lambda))\overline{h}\|_{\mathfrak{S}^{1}(L^{2}(\mathbb{T}^{n}))}&=\|ha(\epsilon^{-1}(\sqrt{-\Delta_{\mathbb{T}^{n}}}-\lambda))\|^{2}_{\mathfrak{S}^{2}(L^{2}(\mathbb{T}^{n}))}\\
			&=\int_{\mathbb{T}^{n}}\int_{\mathbb{T}^{n}}|h(x)|^{2}|A(x, y)|^{2}dxdy,
		\end{aligned}
	\end{equation}
	we see that it is enough to bound $\|A(x, y)\|_{L^{\infty}L^{2}(\mathbb{T}^{n}\times\mathbb{T}^{n})}$, as the right side of the above estimate \eqref{infinity} is majorized by $\|A(x, y)\|_{L^{\infty}L^{2}(\mathbb{T}^{n}\times \mathbb{T}^{n})}^{2}\|h\|_{L^{2}(\mathbb{T}^{n})}^{2}$. Furthermore, by the spectral theorem and orthogonality, \begin{equation*}
		\int_{\mathbb{T}^{n}}|A(x, y)|^{2}dy\leqslant C|A(x, x)|, \ \forall x\in\mathbb{T}^{n},
	\end{equation*}
	hence we are left to bound $|A(x, x)|$, all values of $A(x, y)$ on the diagonal. We basically apply the strategy in \cite{BSSY}, see also \cite[Section 3.5]{SoggeHangzhou}. Succinctly speaking, it will be done by dividing the operator $a(\epsilon^{-1}(\sqrt{-\Delta_{\mathbb{T}^{n}}}-\lambda))$ into several parts and estimate them piece by piece.
	
	Write \begin{equation*}
		a(\epsilon^{-1}(\sqrt{-\Delta_{\mathbb{T}^{n}}}-\lambda))=\frac{\epsilon}{\pi}\int_{-\infty}^{\infty}\hat{a}(\epsilon t)e^{-it\lambda}\mathrm{cos}(t\sqrt{-\Delta_{\mathbb{T}^{n}}})dt-a(\epsilon^{-1}(-\sqrt{-\Delta_{\mathbb{T}^{n}}}-\lambda)).
	\end{equation*}
	Note that $|a(\epsilon^{-1}(-\tau-\lambda))|\leqslant C_{N}(1+\epsilon^{-1}|\tau+\lambda|)^{-N}$ for any $N\in\mathbb{N}$, since $a$ is a Schwartz class function. Hence expectedly, the $a(\epsilon^{-1}(-\sqrt{-\Delta_{\mathbb{T}^{n}}}-\lambda))$ part will play a trivial role in what follows.
	
	For the principal part $\frac{\epsilon}{\pi}\int_{-\infty}^{\infty}\hat{a}(\epsilon t)e^{-it\lambda}\mathrm{cos}(t\sqrt{-\Delta_{\mathbb{T}^{n}}})dt$, we choose an even function $c\in C_{0}^{\infty}(\mathbb{R})$ with the properties 
	\[c(t)=1, \quad |t|\leqslant 1; \quad \text{and} \quad c(t)=0, \quad |t|\geqslant 2,\]
	and use this $c$ to split this part further as $I_{1}+I_{2}$, where \begin{equation*}
		I_{1}=\frac{\epsilon}{\pi}\int_{-\infty}^{\infty}c(t)\hat{a}(\epsilon t)e^{-it\lambda}\mathrm{cos}(t\sqrt{-\Delta_{\mathbb{T}^{n}}})dt,
	\end{equation*}
	and \begin{equation*}
		I_{2}=\frac{\epsilon}{\pi}\int_{-\infty}^{\infty}(1-c(t))\hat{a}(\epsilon t)e^{-it\lambda}\mathrm{cos}(t\sqrt{-\Delta_{\mathbb{T}^{n}}})dt.
	\end{equation*}
	
	Denote the kernels of the operators $I_{1}$ and $I_{2}$ by $I_{1}(x, y)$ and $I_{2}(x, y)$ respectively, and that of the negligible part $a(\epsilon^{-1}(-\sqrt{-\Delta_{\mathbb{T}^{n}}}-\lambda))$ by $J(x, y)$. First, recall the well-known estimate $\|e_{j}\|_{L^{\infty}(M)}\leqslant(1+\lambda_{j})^{\frac{n-1}{2}},\ \forall j\geqslant 0$ for eigenfunctions of the Laplacian on any $n$ dimensional compact manifold $M$, and the Weyl formula for any $n$ dimensional compact manifold which states that the number of eigenvalues $\leqslant l$ of $-\Delta_{g}$ is $Cl^{n}+O(l^{n-1})$, $\forall l\geqslant 1$ (see for instance, \cite[Section 3.2]{SoggeHangzhou}), then \begin{equation} \label{J} \begin{aligned}
			J(x, x)&=\sum_{j=0}^{\infty}a(\epsilon^{-1}(-\lambda_{j}-\lambda))|e_{j}(x)|^{2}\\
			&\leqslant C_{N}\sum_{l=0}^{\infty}(1+\epsilon^{-1}|l+\lambda|)^{-N}l^{2n-2}\\
			&\leqslant C_{N}(1+\lambda)^{-N+2n}
		\end{aligned}
	\end{equation}
	for any $N\in\mathbb{N}$.
	
	So we focus on $I_{1}$ and $I_{2}$. Identifying $\mathbb{T}^{n}$ with $D=(-\frac{1}{2}, \frac{1}{2}]^{n}$, we are going to apply the following variant of the Poisson summation formula: \begin{equation} \label{formula}
		\mathrm{cos}(t\sqrt{-\Delta_{\mathbb{T}^{n}}})(x, y)=\sum_{k\in\mathbb{Z}^{n}}\mathrm{cos}(t\sqrt{-\Delta_{\mathbb{R}^{n}}})(x-y+k), \quad\forall x, y\in D,
	\end{equation}
	which can be found in \cite{Hlawka}. See also \cite[Section 3.5]{SoggeHangzhou}.
	
	To estimate $I_{1}(x, x)$, by the Huygens Principle, $\mathrm{cos}(t\sqrt{-\Delta_{\mathbb{R}^{n}}})(x, y)$ is supported where $|x-y|\leqslant|t|$. Therefore, by the support property of $c(t)$, $I_{1}(x, x)$ is equal to \begin{equation}\label{I21}
		\frac{2\epsilon}{(2\pi)^{n+1}}\sum_{|k|\leqslant 2}\int_{\mathbb{R}^{n}}\int_{-\infty}^{\infty}e^{ik\cdot\xi}c(t)\hat{a}(\epsilon t)e^{-it\lambda}\mathrm{cos}(t|\xi|)dtd\xi.
	\end{equation}
	Denote by $H_{\epsilon}(\tau)$ the Fourier transform of $c(t)\hat{a}(\epsilon t)$, which is easily seen to lie in a bounded set of the space $\mathscr{S}(\mathbb{R})$ for all $\epsilon<1$. When $k=(0, \cdots, 0)$, the corresponding summand in the above \eqref{I21} then equals \begin{equation*}
		\frac{\epsilon}{(2\pi)^{n+1}}\int_{\mathbb{R}^{n}}[H_{\epsilon}(\lambda-|\xi|)+H_{\epsilon}(\lambda+|\xi|)]d\xi,
	\end{equation*}
	and can be controlled by $\epsilon\lambda^{n-1}$ after a use of polar coordinates. When $1\leqslant|k|\leqslant 2$, the corresponding summands are \begin{equation} \label{I212}
		\frac{\epsilon}{(2\pi)^{n+1}}\sum_{1\leqslant|k|\leqslant 2}\int_{\mathbb{R}^{n}}e^{ik\cdot\xi}[H_{\epsilon}(\lambda-|\xi|)+H_{\epsilon}(\lambda+|\xi|)]d\xi.
	\end{equation}
	
	For these summands, we use the following expression for the Fourier transform of surface-carried measures on the standard round sphere in $\mathbb{R}^{n}$ (it is in fact true for any smooth hypersurface in $\mathbb{R}^{n}$ with nonvanishing Gaussian curvature, cf. \cite[Section 1.2]{FIO}): Let $d\mu$ be a $C_{0}^{\infty}$ measure on the sphere (for instance, the induced Lebesgue measure), then \begin{equation} \label{surface}
		|\widehat{d\mu}(\xi)|=\sum_{\pm}(1+|\xi|)^{-(n-1)/2}m_{\pm}(|\xi|)e^{\pm i|\xi|},
	\end{equation}
	where for $r\geqslant 1$, $m_{\pm}(r)$ both satisfy 
	\[\left |\frac{d^{j}}{dr^{j}}m_{\pm}(r)\right |\leqslant C_{j}r^{-j}, \quad j=0, 1, 2, \dots.\]
	With this expression, by using polar coordinates, \eqref{I212} equals \begin{equation*}
		\frac{\epsilon}{(2\pi)^{n+1}}\sum_{1\leqslant|k|\leqslant 2}\int_{\mathbb{R}^{n}}e^{ir|k|}r^{\frac{n-1}{2}}|k|^{-\frac{n-1}{2}}[H_{\epsilon}(\lambda-r)+H_{\epsilon}(\lambda+r)]m_{\pm}(r|k|)dr,
	\end{equation*}
	and is easily seen to be bounded by $C\epsilon\lambda^{\frac{n-1}{2}}$. Therefore, \begin{equation} \label{I1}
		|I_{1}(x, x)|\leqslant C\epsilon\lambda^{n-1}.
	\end{equation}
	
	Now we come to analyzing $I_{2}(x, x)$. By the Huygens Principle and the support properties of $(1-c(t))$ and $\hat{a}(\epsilon t)$, we may write $I_{2}(x, x)$ as \begin{equation}
		I_{2}(x, x)=\frac{2\epsilon}{(2\pi)^{n+1}}\sum_{1\leqslant |k|\leqslant 1/\epsilon}\int_{\mathbb{R}^{n}}\int_{-\infty}^{\infty}e^{ik\cdot\xi}(1-c(t))\hat{a}(\epsilon t)e^{-it\lambda}\mathrm{cos}(t|\xi|)dtd\xi.
	\end{equation}
	In order to estimate the right side of this equality, we further split it as the difference of \begin{equation*}
		I_{21}(x, x)=\frac{2\epsilon}{(2\pi)^{n+1}}\sum_{1\leqslant |k|\leqslant 1/\epsilon}\int_{\mathbb{R}^{n}}\int_{-\infty}^{\infty}e^{ik\cdot\xi}c(t)\hat{a}(\epsilon t)e^{-it\lambda}\mathrm{cos}(t|\xi|)dtd\xi
	\end{equation*}
	and \begin{equation} \label{I22}
		I_{22}(x,x)=\frac{2\epsilon}{(2\pi)^{n+1}}\sum_{1\leqslant |k|\leqslant 1/\epsilon}\int_{\mathbb{R}^{n}}\int_{-\infty}^{\infty}e^{ik\cdot\xi}\hat{a}(\epsilon t)e^{-it\lambda}\mathrm{cos}(t|\xi|)dtd\xi.
	\end{equation}
	
	$I_{21}(x, x)$ may be treated in the same way as we did for $I_{1}(x, x)$, as $c(t)$ vanishes when $|t|\geqslant 2$ and hence only summands with $1\leqslant|k|\leqslant 2$ are nonzero. Thus, $|I_{21}(x, x)|\leqslant C\epsilon\lambda^{\frac{n-1}{2}}$. So we turn to $I_{22}(x, x)$.
	
	$I_{22}(x, x)$ is equal to \begin{equation*}
		\frac{1}{(2\pi)^{n+1}}\sum_{1\leqslant |k|\leqslant 1/\epsilon}\int_{\mathbb{R}^{n}}e^{ik\cdot\xi}[a(\epsilon^{-1}(-\lambda+|\xi|))+a(\epsilon^{-1}(-\lambda-|\xi|))]d\xi.
	\end{equation*}
	Using \eqref{surface} again and that $a$ is a Schwartz class function, the above sum can be bounded by \begin{equation*}
		\begin{aligned}
			&\sum_{1\leqslant |k|\leqslant 1/\epsilon}\int_{0}^{\infty}r^{\frac{n-1}{2}}|k|^{-\frac{n-1}{2}}[(1+\epsilon^{-1}|\lambda-r|)^{-n}+(1+\epsilon^{-1}|\lambda+r|)^{-n}]dr\\
			&\leqslant C\sum_{1\leqslant |k|\leqslant1/\epsilon} \epsilon\lambda^{\frac{n-1}{2}}|k|^{-\frac{n-1}{2}}\\
			&\leqslant C(\lambda/\epsilon)^{\frac{n-1}{2}}
		\end{aligned}
	\end{equation*}
	Therefore, $|I_{22}(x, x)|\leqslant C(\lambda/\epsilon)^{\frac{n-1}{2}}$, and hence \begin{equation} \label{I2}
		|I_{2}(x, x)|\leqslant(\lambda/\epsilon)^{\frac{n-1}{2}}.
	\end{equation}
	
	Combining the estimates \eqref{I1}, \eqref{I2} and \eqref{J}for $I_{1}(x, x)$, $I_{2}(x, x)$ and $J(x,x)$, we obtain the desired estimate \eqref{goal2}, thereby finishing the proof of Theorem \ref{Thm3}. We remark here that by using a different refined estimate for the nonlocal term $I_{2}$ via multidimensional Weyl sum estimate, we can improve slightly the power of $\lambda$ in the expression for $\epsilon(\lambda)$ in our main theorem, which we do not include here. See \cite{Hickman} for details in the case of single functions in a slightly different approach.
\end{proof}

\vspace{1ex}
	
	\begin{proof}[Proof of Corollary \ref{Cor1}] Recall our notation: for any small positive number $\lambda^{-1}<\epsilon<1$, $\chi_{\lambda}^{\epsilon}$ means the spectral projection operator for $-\Delta_{\mathbb{T}^{n}}$ onto the interval $[\lambda^{2}, (\lambda+\epsilon)^{2})$. We claim that there exists a $C>0$, not depending on $\epsilon$, such that for any $h\in L^{n+1}(\mathbb{T}^{n})$, we have \begin{equation} \label{critical1}
		\|h\chi_{\lambda}^{\epsilon}\overline{h}\|_{\mathfrak{S}^{n+1}(L^{2}(\mathbb{T}^{n}))}\leqslant C\lambda^{\frac{n-1}{n+1}}\|h\|_{L^{n+1}(\mathbb{T}^{n})}^{2}.
	\end{equation}
	Then, interpolating with \eqref{SchattenEquiv} in Theorem \ref{Thm3} yields the following estimate at any $\frac{2(n+1)}{n-1}\leqslant p\leqslant\infty$: \begin{equation*}
		\|h\chi_{\lambda}^{\epsilon}\overline{h}\|_{\mathfrak{S}^{\alpha(p)'}(L^{2}(\mathbb{T}^{n}))}\leqslant C\lambda^{2\sigma(p)}\epsilon(\lambda)^{\left(1-\frac{2(n+1)}{p(n-1)}\right)}\|h\|_{L^{2p/(p-2)}(\mathbb{T}^{n})}^{2}, \ \forall h\in L^{2p/(p-2)}(\mathbb{T}^{n}),
	\end{equation*}
where $\alpha(p)=\frac{p(n-1)}{2n}$ and $\sigma(p)=n(\frac{1}{2}-\frac{1}{p})-\frac{1}{2}$. This together with the duality principle implies the Proposition below, from which Corollary \ref{Cor1} clearly follows.

\begin{proposition}
	Suppose the operator $\chi_{\lambda}^{\epsilon}$ and the space $U_{\lambda}^{\epsilon}$ are as in Theorem \ref{Thm1}. Let $\frac{2(n+1)}{n-1}\leqslant p\leqslant\infty$. Then there exists a $C>0$ such that for any orthonormal system $\{g_{j}(x)\}_{j\in J}$ of $U_{\lambda}^{\epsilon}$ and any sequence of complex numbers $\{\zeta_{j}\}_{j\in J}$, we have \begin{equation*}
		\|\sum_{j\in J}\zeta_{j}|g_{j}|^{2}\|_{L^{p/2}(\mathbb{T}^{n})}\leqslant C\lambda^{2\sigma(p)}\epsilon(\lambda)^{\left(1-\frac{2(n+1)}{p(n-1)}\right)}\left(\sum_{j\in J}|\zeta_{j}|^{\alpha(p)}\right)^{1/\alpha(p)},
	\end{equation*}
	where $\alpha(p)=\frac{p(n-1)}{2n}$ and $\sigma(p)=n(\frac{1}{2}-\frac{1}{p})-\frac{1}{2}$.
\end{proposition}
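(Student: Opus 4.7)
The plan is to execute the three-step program already sketched in the discussion preceding the proposition: first prove the critical-exponent Schatten endpoint \eqref{critical1}; then complex-interpolate it against the trace-class bound \eqref{SchattenEquiv} of Theorem \ref{Thm3}; and finally apply Frank-Sabin's duality principle \cite[Lemma 3]{FS2} to convert the resulting Schatten estimates into bounds on the density $\sum_{j}\zeta_{j}|g_{j}|^{2}$.

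For the critical endpoint, I would reduce \eqref{critical1} to the corresponding statement with $\chi_{\lambda}^{\epsilon}$ replaced by the full projector $\chi_{\lambda}$. Since $\chi_{\lambda}^{\epsilon}$ and $\chi_{\lambda}$ are orthogonal projections with $\mathrm{Ran}\,\chi_{\lambda}^{\epsilon}\subset\mathrm{Ran}\,\chi_{\lambda}$, testing against any $f\in L^{2}(\mathbb{T}^{n})$ gives the positive-operator inequality $0\leqslant h\chi_{\lambda}^{\epsilon}\bar{h}\leqslant h\chi_{\lambda}\bar{h}$, so by monotonicity of the $\mathfrak{S}^{n+1}$ norm on the positive cone it is enough to bound $\|h\chi_{\lambda}\bar{h}\|_{\mathfrak{S}^{n+1}}\leqslant C\lambda^{(n-1)/(n+1)}\|h\|_{L^{n+1}}^{2}$. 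This last bound is Frank-Sabin's critical Schatten estimate from \cite{FS}, itself the dual reformulation (again via \cite[Lemma 3]{FS2}) of Sogge's Stein-Tomas-type $L^{2}\to L^{2(n+1)/(n-1)}$ inequality for $\chi_{\lambda}$; indeed at the critical exponent $2\sigma(p)=(n-1)/(n+1)$ and $2p/(p-2)=n+1$, which matches the constants exactly.

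With this endpoint in hand, I would apply complex interpolation in the Schatten scale to the sesquilinear map $h\mapsto h\chi_{\lambda}^{\epsilon}\bar{h}$ between the $\mathfrak{S}^{1}$ bound of Theorem \ref{Thm3} (with $h\in L^{2}$ and constant $\lambda^{n-1}\epsilon$) and the $\mathfrak{S}^{n+1}$ bound just established (with $h\in L^{n+1}$ and constant $\lambda^{(n-1)/(n+1)}$). Parametrizing by $\theta\in[0,1]$ through $1/\alpha(p)'=(1-\theta)/(n+1)+\theta$ and $(p-2)/(2p)=(1-\theta)/(n+1)+\theta/2$ identifies the family with $p\in[2(n+1)/(n-1),\infty]$; a direct check then shows that the linearly interpolated exponent of $\lambda$ is $(n-1)/\alpha(p)'=(n-1)-2n/p=2\sigma(p)$ and the linearly interpolated exponent of $\epsilon$ is $\theta=1-2(n+1)/(p(n-1))$, so one recovers precisely the intermediate Schatten estimate displayed in the text preceding the proposition. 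Applying the duality principle \cite[Lemma 3]{FS2} with the pairing $r=p/2$, $q=\alpha(p)$, $2r'=2p/(p-2)$, $q'=\alpha(p)'$ then converts the Schatten bound into the proposition's inequality for $\|\sum_{j}\zeta_{j}|g_{j}|^{2}\|_{L^{p/2}(\mathbb{T}^{n})}$.

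The only nontrivial point in all this is the uniformity in $\epsilon$ of the critical-endpoint constant in \eqref{critical1}, which is exactly what the positive-operator monotonicity argument in the first step secures; beyond that, the proof is essentially bookkeeping of exponents under interpolation, since the critical Schatten estimate is already in \cite{FS} (as the authors note in their closing remarks) and the $p=\infty$ trace-class endpoint is furnished by Theorem \ref{Thm3}. No new analytic input is required.
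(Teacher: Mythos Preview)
Your overall architecture---establish the critical $\mathfrak{S}^{n+1}$ endpoint \eqref{critical1} uniformly in $\epsilon$, interpolate against the $\mathfrak{S}^{1}$ bound of Theorem~\ref{Thm3}, then invoke the duality lemma---is exactly the paper's, and your exponent bookkeeping in the interpolation step is correct.

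The one genuine difference is in how the critical endpoint is obtained. You argue directly from the operator inequality $0\leqslant\chi_{\lambda}^{\epsilon}\leqslant\chi_{\lambda}$ (orthogonal projections with nested ranges), conjugate by $h$, and use monotonicity of $\|\cdot\|_{\mathfrak{S}^{n+1}}$ on the positive cone to reduce to Frank--Sabin's bound \eqref{FS2} for the unit-band projector. The paper instead first passes to the smooth multiplier $b(\epsilon^{-1}(\sqrt{-\Delta_{\mathbb{T}^{n}}}-\lambda))$, then dominates it pointwise by a weighted sum $\sum_{l\geqslant 0}(1+\epsilon^{-1}|l-\lambda|)^{-2}\chi_{l}$ and sums the Frank--Sabin estimate over all $l$. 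Both routes yield an $\epsilon$-independent constant, but yours is strictly shorter: the detour through $b$ and the telescoping over $l$ is unnecessary here, since one already has the containment $\mathrm{Ran}\,\chi_{\lambda}^{\epsilon}\subset\mathrm{Ran}\,\chi_{\lambda}$. The paper's approach would, however, generalize more readily if one wanted to replace $\chi_{\lambda}^{\epsilon}$ by a non-projection spectral multiplier.
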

	
It therefore remains to prove the claim. In \cite{FS}, Frank-Sabin showed that for a general compact manifold $(M, g)$, \begin{equation} \label{FS2}
	\|h\chi_{\lambda}\overline{h}\|_{\mathfrak{S}^{n+1}(L^{2}(M))}\leqslant C\lambda^{\frac{n-1}{n+1}}\|h\|_{L^{n+1}(M)}^{2},
\end{equation} 
where $\chi_{\lambda}$ means the spectral projection operator for $-\Delta_{\mathbb{T}^{n}}$ onto the band $[\lambda^{2}, (\lambda+1)^{2})$. We will appeal to this estimate to prove our claim. 

Observe that by a similar line of reasoning as in the $p=\infty$ case, it suffices to prove \begin{equation} \label{critical}
	\|hb(\epsilon^{-1}(\sqrt{-\Delta_{\mathbb{T}^{n}}}-\lambda))\overline{h}\|_{\mathfrak{S}^{n+1}(L^{2}(\mathbb{T}^{n}))}\leqslant C\lambda^{\frac{n-1}{n+1}}\|h\|_{L^{n+1}(\mathbb{T}^{n})}^{2},
\end{equation}
where $b$ is the auxiliary function used there. Since $b$ is a Schwartz class function, \begin{equation*}
	b(\epsilon^{-1}(\tau-\lambda))\leqslant C_{N}(1+\epsilon^{-1}|\tau-\lambda|)^{-N}.
\end{equation*}
Thus, we get the operator inequality \begin{equation} \label{Opine} \begin{aligned}
		0\leqslant hb(\epsilon^{-1}(\sqrt{-\Delta_{\mathbb{T}^{n}}}-\lambda))\overline{h}&\leqslant C_{N}h[(1+\epsilon^{-1}|\sqrt{-\Delta_{\mathbb{T}^{n}}}-\lambda|)^{-2}]\overline{h}\\
		&\leqslant C_{N}(1+\epsilon^{-1}|l-\lambda|)^{-2}\sum_{l=0}^{\infty}h\chi_{l}\overline{h}.
	\end{aligned}
\end{equation}
Applying Frank-Sabin's work \eqref{FS2}, we derive \begin{equation*} 
	\begin{aligned}
		&\|hb(\epsilon^{-1}(\sqrt{-\Delta_{\mathbb{T}^{n}}}-\lambda))\overline{h}\|_{\mathfrak{S}^{n+1}(L^{2}(\mathbb{T}^{n}))}\leqslant C\sum_{l=0}^{\infty}(1+\epsilon^{-1}|l-\lambda|)^{-2}\|h\chi_{l}\overline{h}\|_{\mathfrak{S}^{n+1}(L^{2}(\mathbb{T}^{n}))}\\
		&\leqslant C\sum_{l=0}^{\infty}l^{\frac{n-1}{n+1}}(1+\epsilon^{-1}|l-\lambda|)^{-2}\|h\|_{L^{n+1}(\mathbb{T}^{n})}^{2}\\ 
		&\leqslant C\lambda^{\frac{n-1}{n+1}}\|h\|_{L^{n+1}(\mathbb{T}^{n})}^{2},
	\end{aligned}
\end{equation*}
where the constant $C$ does not rely on $\epsilon$, as desired.
\end{proof}
	
	\maketitle
	\section{Proof of Theorem \ref{Thm6}}
	
	As in the case of the flat torus, Theorem \ref{Thm6} is an easy consequence of the following stronger result:
	
	\begin{theorem} \label{Thm7}
		Let $\chi_{\lambda}^{\epsilon}$ be the spectral projection operator for $-\Delta_{g}$ onto the interval $[\lambda^{2}, (\lambda+\epsilon(\lambda))^{2})$, where $\epsilon(\lambda)=K\frac{1}{\mathrm{ln}\lambda}$ with $K$ being some fixed constant. Let $U_{\lambda}^{\epsilon}=\chi_{\lambda}^{\epsilon}(L^{2}(M))$.  Then, there exists a $C>0$ such that for any orthonormal system $\{g_{j}(x)\}_{j\in J}$ of $U_{\lambda}^{\epsilon}$ and any sequence of complex numbers $\{\zeta_{j}\}_{j\in J}$, we have \begin{equation*}
			\|\sum_{j\in J}\zeta_{j}|g_{j}|^{2}\|_{L^{\infty}(M)}\leqslant C\lambda^{n-1}\epsilon(\lambda).
		\end{equation*}
	\end{theorem}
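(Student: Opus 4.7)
The plan is to follow verbatim the scheme used to prove Theorem \ref{Thm3}, replacing the Poisson summation formula on $\mathbb{T}^n$ by the method of lifting to the universal cover $(\tilde M, \tilde g)$, which is available because $(M,g)$ has nonpositive sectional curvature. First, I would apply the Frank-Sabin duality principle \cite[Lemma 3]{FS2} to convert Theorem \ref{Thm7} into the equivalent Schatten-class estimate
\[
\|h\chi_\lambda^\epsilon \bar h\|_{\mathfrak{S}^1(L^2(M))} \le C\lambda^{n-1}\epsilon(\lambda)\|h\|_{L^2(M)}^2,
\]
and then, exactly as in Section 2, smooth the projector by writing $b=a^2$ where $a$ is an even Schwartz class function with $a(0)=1$ and $\hat a$ supported in $(-1,1)$. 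Since $\|h\, a(\epsilon^{-1}(\sqrt{-\Delta_g}-\lambda))\|_{\mathfrak{S}^2}^2$ majorizes the $\mathfrak{S}^1$ norm, the task reduces to a pointwise bound $|A(x,x)|\le C\lambda^{n-1}\epsilon(\lambda)$ uniformly in $x\in M$, where $A$ is the kernel of $a(\epsilon^{-1}(\sqrt{-\Delta_g}-\lambda))$.

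Next, I would Fourier-invert the symbol and write
\[
a(\epsilon^{-1}(\sqrt{-\Delta_g}-\lambda)) = \frac{\epsilon}{\pi}\int_{-\infty}^{\infty}\hat a(\epsilon t)\, e^{-it\lambda}\cos(t\sqrt{-\Delta_g})\, dt \;-\; a(\epsilon^{-1}(-\sqrt{-\Delta_g}-\lambda)),
\]
with the second term negligible by the Weyl law and Schwartz decay. With an even cutoff $c\in C_0^\infty(\mathbb{R})$ that is $1$ on $[-1,1]$ and vanishes off $[-2,2]$, split the first term as $I_1+I_2$ exactly as in the proof of Theorem \ref{Thm3}. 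The short-time piece $I_1$ is purely local in view of finite propagation, and inside a small geodesic ball the Hadamard parametrix combined with the oscillatory-integral analysis used for \eqref{I1} gives $|I_1(x,x)|\le C\epsilon\lambda^{n-1}$ without any change; here the flat-torus Fourier transform of the surface measure on a sphere is replaced by its analogue for the symbol of $\sqrt{-\Delta_g}$ in normal coordinates, still satisfying \eqref{surface}.

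The core new step is the treatment of $I_2(x,x)$, in which $\hat a(\epsilon t)$ effectively restricts $1\le |t|\le 2\epsilon^{-1}=(2/K)\ln\lambda$. Since $(M,g)$ has nonpositive sectional curvature, by Cartan-Hadamard the universal cover $(\tilde M,\tilde g)$ has no conjugate points, and the Hadamard parametrix for $\cos(t\sqrt{-\Delta_{\tilde g}})(\tilde x,\tilde y)$ is globally valid on $\tilde M$ with quantitative bounds up to the Ehrenfest time. Fixing a fundamental domain and lifting, I would use the standard identity
\[
\cos(t\sqrt{-\Delta_g})(x,y) = \sum_{\alpha\in\Gamma}\cos(t\sqrt{-\Delta_{\tilde g}})(\tilde x,\alpha\tilde y),
\]
where $\Gamma$ is the deck transformation group, as the replacement of the Poisson summation formula \eqref{formula}. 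For each $\alpha\in\Gamma$, the $t$-integration of $e^{-it\lambda}\hat a(\epsilon t)$ against the lifted cosine kernel is then analyzed by the same stationary-phase / surface-measure expansion \eqref{surface} that was used for $I_1$ and $I_2$ on the torus, yielding for each $\alpha$ a contribution of size at most $C\epsilon\lambda^{(n-1)/2}(1+d_{\tilde g}(\tilde x,\alpha\tilde x))^{-(n-1)/2}$ times harmless factors. Summing over those $\alpha$ with $d_{\tilde g}(\tilde x,\alpha\tilde x)\le 2\epsilon^{-1}$ (the only ones contributing by Huygens) produces a geometric factor controlled via the volume comparison theorem, whose exponential growth rate depends only on a lower bound for the sectional curvature.

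The main obstacle, and the reason the spectral band can only be narrowed by a logarithmic factor rather than a power of $\lambda$ as in the torus case, is the tension between these two effects: the pointwise decay $(1+d_{\tilde g}(\tilde x,\alpha\tilde x))^{-(n-1)/2}$ gained from the oscillatory integral on the universal cover versus the exponential growth of the number of deck transformations with $d_{\tilde g}(\tilde x,\alpha\tilde x)\le R$. With the choice $\epsilon(\lambda)=K/\ln\lambda$, the volume of the relevant ball on $\tilde M$ is at most $e^{CR}\big|_{R=2\epsilon^{-1}}=\lambda^{2C/K}$, which is polynomial in $\lambda$ precisely because the time cutoff is logarithmic. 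Taking the constant $K$ sufficiently large (depending only on the curvature lower bound and the dimension) so that $2C/K\le (n-1)/2$, the combined bound on $I_2(x,x)$ is at most $C\epsilon\lambda^{(n-1)/2+2C/K}\le C\epsilon\lambda^{n-1}$, which together with the short-time estimate finishes the proof. Carrying out this quantitative bookkeeping on $\tilde M$ — combining the globally valid Hadamard parametrix, the surface-measure expansion \eqref{surface}, and the volume counting for $\Gamma$ — is the only genuinely new ingredient compared with Section 2.
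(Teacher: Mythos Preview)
Your outline is essentially the paper's own argument: duality to a Schatten-$1$ bound, smoothing via $b=a^{2}$, reduction to the diagonal kernel $A(x,x)$, lifting $\cos(t\sqrt{-\Delta_g})$ to the universal cover through the deck-transformation formula, and then choosing the constant in $\epsilon(\lambda)=K/\ln\lambda$ so that the exponential losses become a sub-$(n-1)$ power of $\lambda$. The paper organizes the endgame slightly differently---it does not reintroduce the $I_1/I_2$ split in the curved case but directly separates the $\gamma=\mathrm{Id}$ summand (giving $C_1(\epsilon\lambda^{n-1}+\exp(C_2/\epsilon)\lambda^{n-3})$) from the $\gamma\neq\mathrm{Id}$ summands (each $\le C_1\exp(C_2/\epsilon)\lambda^{(n-1)/2}$), citing \cite[(3.6.8)--(3.6.9)]{SoggeHangzhou}---but this is only a difference of bookkeeping.

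One technical point in your per-term bound deserves correction. You claim each $\alpha\neq\mathrm{Id}$ contributes $C\epsilon\lambda^{(n-1)/2}(1+d_{\tilde g}(\tilde x,\alpha\tilde x))^{-(n-1)/2}$ ``times harmless factors,'' and you attribute the exponential loss solely to the \emph{number} of deck transformations in the ball of radius $2\epsilon^{-1}$. On a genuinely curved $(\tilde M,\tilde g)$ this is too optimistic: the coefficients and remainder of the Hadamard parametrix themselves grow like $\exp(Cd_{\tilde g}(\tilde x,\tilde y))$, so the correct per-term bound (as in the paper's \eqref{sum2}) is $C_1\exp(C_2/\epsilon)\lambda^{(n-1)/2}$, with no $\epsilon$ prefactor and no $(1+d)^{-(n-1)/2}$ gain surviving. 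This is a second, independent source of the exponential factor, not a ``harmless'' one. Fortunately it does not break your scheme: since $d\le 2\epsilon^{-1}$, this extra $\exp(C_2/\epsilon)$ is again of the form $\lambda^{C'/K}$, and absorbing it merely forces a larger choice of $K$ in the final balancing. With that adjustment your argument goes through and matches the paper's.
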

	
	And again, by Frank-Sabin's duality principle \cite[Lemma 3]{FS2}, the above theorem is equivalent to a statement involving the Schatten norm, and it is this equivalent version that we are actually going to prove.
	
	\begin{theorem} \label{Thm8}
		With the definition of $\chi_{\lambda}^{\epsilon}$ the same as in Theorem \ref{Thm7}, there exists a $C>0$ such that for any $h\in L^{2}(M)$, \begin{equation} \label{nonpmain}
			\|h\chi_{\lambda}^{\epsilon}\overline{h}\|_{\mathfrak{S}^{1}(L^{2}(M))}\leqslant C\lambda^{n-1}\epsilon(\lambda)\|h\|_{L^{2}(M)}^{2}.
		\end{equation}
	\end{theorem}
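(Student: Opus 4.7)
The plan is to mirror the proof of Theorem \ref{Thm3} step by step, substituting the Poisson summation formula \eqref{formula} on the torus with the universal-cover lift on $(M, g)$ together with B\'erard's long-time Hadamard parametrix for nonpositive curvature. First I would carry out the same reductions as in Section 2: pick the even Schwartz functions $a$ and $b = a^{2}$ with $\hat a$ compactly supported, exploit $b(0) = 1$ to obtain the operator inequality $0 \leqslant h\chi_{\lambda}^{\delta\epsilon}\bar h \leqslant 2hb(\epsilon^{-1}(\sqrt{-\Delta_{g}}-\lambda))\bar h$, reduce to the $b$-version, and apply the trace-class identity
\[
\|hb(\epsilon^{-1}(\sqrt{-\Delta_{g}}-\lambda))\bar h\|_{\mathfrak{S}^{1}(L^{2}(M))} = \int_{M}\int_{M} |h(x)|^{2} |A(x,y)|^{2}\,dx\,dy
\]
together with the spectral-theorem inequality $\int_{M} |A(x,y)|^{2}\,dy \leqslant C|A(x,x)|$. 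This reduces the matter to the pointwise bound $|A(x,x)| \leqslant C\lambda^{n-1}\epsilon(\lambda)$ uniform in $x \in M$, where $A$ is the kernel of $a(\epsilon^{-1}(\sqrt{-\Delta_{g}}-\lambda))$.

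Second, I would decompose $a(\epsilon^{-1}(\sqrt{-\Delta_{g}}-\lambda))$ via the cosine Fourier representation and the same cutoff $c(t) \in C_{0}^{\infty}((-2,2))$ used in Section 2, producing a local part $I_{1}$, a nonlocal part $I_{2}$, and a negligible negative-frequency remainder $J$. The $J$-term is controlled by Schwartz decay of $a$ and the Weyl law exactly as in \eqref{J}. For $I_{1}(x,x)$, whose $t$-support lies in $|t|\leqslant 2$, I would work on a geodesic ball around $x$ smaller than the injectivity radius of $M$; the local Hadamard parametrix there reduces $I_{1}(x,x)$ to essentially the same oscillatory integral analyzed for $I_{1}$ in the torus proof, yielding $|I_{1}(x,x)| \leqslant C\epsilon\lambda^{n-1}$.

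Third, for the nonlocal piece $I_{2}(x,x)$, I would lift to the universal cover $(\tilde M, \tilde g)$, which by Cartan--Hadamard is diffeomorphic to $\mathbb{R}^{n}$ and has no conjugate points, and use the deck-transformation version of \eqref{formula},
\[
\cos(t\sqrt{-\Delta_{g}})(x,x) = \sum_{\alpha \in \Gamma}\cos(t\sqrt{-\Delta_{\tilde g}})(\tilde x, \alpha\tilde x), \qquad \Gamma = \pi_{1}(M),
\]
with the Huygens principle restricting the sum to $\{\alpha : d_{\tilde g}(\tilde x, \alpha\tilde x) \leqslant |t|\}$ and the support of $\hat a(\epsilon t)$ restricting $|t|$ to $\leqslant K^{-1}\ln\lambda$. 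Each summand is estimated using B\'erard's global Hadamard parametrix on $\tilde M$; stationary phase in $(t,\xi)$ against $\hat a(\epsilon t)e^{-it\lambda}$ then gives a per-orbit bound of order $C\epsilon\lambda^{(n-1)/2}$, mirroring the computation around \eqref{I212}. Summing over $\alpha$, whose count with displacement $\leqslant K^{-1}\ln\lambda$ is at most $C\lambda^{C_{0}/K}$ by volume comparison in nonpositive curvature, produces $|I_{2}(x,x)| \leqslant C\epsilon\lambda^{(n-1)/2 + C_{0}/K}$; choosing $K$ sufficiently large makes this dominated by $\epsilon\lambda^{n-1}$, closing the diagonal bound.

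The main obstacle is the B\'erard parametrix step: one must verify that the Hadamard amplitudes on the universal cover satisfy the requisite symbol estimates uniformly in the basepoint, with only polynomial-in-$\lambda$ loss over the range $d(\tilde x, \alpha\tilde x) \leqslant K^{-1}\ln\lambda$, so that the per-orbit stationary-phase bound and the orbit-counting bound combine cleanly. This is precisely where the nonpositive sectional curvature hypothesis enters, via the Rauch comparison theorem and the global absence of conjugate points on $\tilde M$. Once this input is granted, the remainder of the proof is a faithful repetition of the Section 2 argument, with the choice $\epsilon(\lambda) = K/\ln\lambda$ calibrated so that the exponential-in-time orbit count is absorbed into the leading $\epsilon\lambda^{n-1}$ term.
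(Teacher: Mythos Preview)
Your proposal is correct and follows essentially the same route as the paper: reduce to a uniform diagonal bound on the kernel of $a(\epsilon^{-1}(\sqrt{-\Delta_g}-\lambda))$, lift to the universal cover, and control the resulting deck-transformation sum via the B\'erard--Hadamard parametrix combined with the exponential orbit count in nonpositive curvature. The only differences are cosmetic: the paper does not introduce the time cutoff $c(t)$ but instead lifts the full cosine integral directly and splits into $\gamma=\mathrm{Identity}$ versus $\gamma\neq\mathrm{Identity}$, quoting (3.6.8)--(3.6.9) of \cite{SoggeHangzhou} for the two bounds, and it records an extra parametrix-remainder term $\exp(C_{2}/\epsilon)\lambda^{n-3}$ in the identity contribution which your sketch suppresses but which is likewise absorbed by taking $K$ large.
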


     \begin{proof} [Proof of Theorem \ref{Thm8}]	
	 Abusing notation as before, for any fixed small positive number $\lambda^{-1}<\epsilon<1$, let $\chi_{\lambda}^{\epsilon}$ denote the spectral projection operator for $-\Delta_{g}$ onto the interval $[\lambda^{2}, (\lambda+\epsilon)^{2})$. Theorem \ref{Thm8} follows once we have proved the estimate
	\begin{equation} \label{nonpmain1}
		\|h\chi_{\lambda}^{\epsilon}\overline{h}\|_{\mathfrak{S}^{1}(L^{2}(M))}\leqslant C_{1}(\epsilon\lambda^{n-1}+\mathrm{exp}(C_{2}/\epsilon)\mathrm{max}\{\lambda^{\frac{n-1}{2}}, \lambda^{n-3}\})\|h\|_{L^{2}(M)}^{2}.
	\end{equation}
	Indeed, taking $\epsilon=\frac{1}{\epsilon_{1}\mathrm{ln}\lambda}$ in \eqref{nonpmain1}, where $\epsilon_{1}>0$ is small enough to guarantee
	\[\lambda^{C_{2}\epsilon_{1}+\mathrm{max}\{\frac{n-1}{2}, n-3\}}\leqslant\epsilon\lambda^{n-1}, \quad\forall\lambda\geqslant 1,\]
	we obtain
	\begin{equation} 
		\|h\chi_{\lambda}^{\epsilon}\overline{h}\|_{\mathfrak{S}^{1}(L^{2}(M))}\leqslant C\epsilon\lambda^{n-1}\|h\|_{L^{2p/(p-2)}(M)}^{2},
	\end{equation}
	with $\epsilon$ being a fixed multiple of $\frac{1}{\mathrm{ln}\lambda}$, which is what we want.
	
	It remains to prove \eqref{nonpmain1}. We recall the auxiliary functions in Section 2. $a\in\mathscr{S}(\mathbb{R})$ is an even nonnegative function satisfying $a(0)=1$ and $\mathrm{supp}\ \hat{a}\subseteq(-1, 1)$, and $b=a^{2}$. Arguing in the same way, we deduce that in order to prove \eqref{nonpmain1}, it suffices to show \begin{multline} \label{a3}
		\|hb(\epsilon^{-1}(\sqrt{-\Delta_{g}}-\lambda))\overline{h}\|_{\mathfrak{S}^{1}(L^{2}(M))}\leqslant\\ C_{1}(\epsilon\lambda^{n-1}+\mathrm{exp}(C_{2}/\epsilon)\mathrm{max}\{\lambda^{\frac{n-1}{2}}, \lambda^{n-3}\})\|h\|_{L^{2}(M)}^{2}.
	\end{multline}
	
	For this, following the lines of the proof of the flat torus case, we see that it is enough to bound $A(x, x), \forall x\in M$ by $C_{1}(\epsilon\lambda^{n-1}+\mathrm{exp}(C_{2}/\epsilon)\mathrm{max}\{\lambda^{\frac{n-1}{2}}, \lambda^{n-3}\})$, where $A(x, y)$ is the kernel of the operator $a(\epsilon^{-1}(\sqrt{-\Delta_{g}}-\lambda))$. Writing \begin{equation*}
		a(\epsilon^{-1}(\sqrt{-\Delta_{g}}-\lambda))=\frac{\epsilon}{\pi}\int_{-\infty}^{\infty}\hat{a}(\epsilon t)e^{-it\lambda}\mathrm{cos}(t\sqrt{-\Delta_{g}})dt-a(\epsilon^{-1}(-\sqrt{-\Delta_{g}}-\lambda))
	\end{equation*} and recalling that the kernel of $a(\epsilon^{-1}(-\sqrt{-\Delta_{g}}-\lambda))$  is $O(\lambda^{-N})$ for any $N\in\mathbb{N}$, we are left to bound the kernel of
	\[\frac{\epsilon}{\pi}\int_{-\infty}^{\infty}\hat{a}(\epsilon t)e^{-it\lambda}\mathrm{cos}(t\sqrt{-\Delta_{g}})dt,\]
	on the diagonal.
	
	In order to accomplish this, similar to the flat torus case, we use a Poisson-type formula that relates the kernel of the wave operator $\mathrm{cos}(t\sqrt{-\Delta_{g}})$ to a periodic sum of the kernels of the wave operator on the universal cover of $(M, g)$. By Hadamard's theorem (cf. \cite[Section 7.3]{DoCarmo}), since $(M, g)$ has nonpositive sectional curvature, the universal cover of $(M, g)$ is $\mathbb{R}^{n}$. Let $p:\ \mathbb{R}^{n}\rightarrow M$ be the covering map, and denote the pullback of $g$ via $p$ by $\tilde{g}$. For notational convenience, we use $(\tilde{M}, \tilde{g})$ to represent the universal cover $\mathbb{R}^{n}$ endowed with metric $\tilde{g}$, and letters with a tilde are to denote objects associated with $\tilde{M}$. Let $D\subseteq\tilde{M}$ be a fundamental domain, which can be identified with $M$ via the covering map $p$. Recall also that by a deck transformation, we mean a homeomorphism $\gamma:\ \tilde{M}\rightarrow\tilde{M}$ satisfying $p=p\circ\gamma$. All deck transformations form a group under composition, which we denote as $\Gamma$. We then have $M\simeq\tilde{M}/\Gamma$. The formula we need is \begin{equation}
		\mathrm{cos}(t\sqrt{-\Delta_{g}})(\tilde{x}, \tilde{y})=\sum_{\gamma\in\Gamma}\mathrm{cos}(t\sqrt{-\Delta_{\tilde{g}}})(\tilde{x}, \gamma(\tilde{y})), \quad\forall \tilde{x}, \tilde{y}\in D.
	\end{equation}
	We refer the reader to \cite[Section 3.6]{SoggeHangzhou} for more background information and the derivation of this formula.
	
	Therefore, it is enough to prove 
	\begin{multline} \label{sum}
			\sum_{\gamma\in\Gamma}\left |\int_{-\infty}^{\infty}\epsilon\hat{a}(\epsilon t)e^{-it\lambda}\mathrm{cos}(t\sqrt{-\Delta_{g}})(\tilde{x}, \gamma(\tilde{x}))dt\right |\\
			\leqslant C_{1}(\epsilon\lambda^{n-1}+\mathrm{exp}(C_{2}/\epsilon)\mathrm{max}\{\lambda^{\frac{n-1}{2}}, \lambda^{n-3}\}), \quad\forall\tilde{x}\in D.
	\end{multline}
	By the Huygens principle and the support property of $\hat{a}$, a summand in \eqref{sum} is nonzero only if $d_{g}(\tilde{x}, \gamma(\tilde{x}))\leqslant \epsilon^{-1}$. Since the volume of any geodesic ball of radius $r$ in $\tilde{M}$ is $O(\mathrm{exp}(C'r))$ for some uniform constant $C'$, and $d_{\tilde{g}}(\gamma_{1}(\tilde{x}), \gamma_{2}(\tilde{x}))\geqslant c$, $\forall\gamma_{1}\neq\gamma_{2}$,  $\gamma_{1}, \gamma_{2}\in\Gamma$, $\forall\tilde{x}\in\tilde{M}$, we deduce that there are $O(\mathrm{exp}(C''/\epsilon))$ nonzero terms in the above sum for some constant $C''$. Thus, \eqref{sum} would be proved if we could show
	\begin{equation} \label{sum1}
		\left |\int_{-\infty}^{\infty}\epsilon\hat{a}(\epsilon t)e^{-it\lambda}\mathrm{cos}(t\sqrt{-\Delta_{g}})(\tilde{x}, \tilde{x})dt\right |\leqslant C_{1}(\epsilon\lambda^{n-1}+\mathrm{exp}(C_{2}/\epsilon)\lambda^{n-3})
	\end{equation}
	and \begin{multline} \label{sum2}
		\left |\int_{-\infty}^{\infty}\epsilon\hat{a}(\epsilon t)e^{-it\lambda}\mathrm{cos}(t\sqrt{-\Delta_{g}})(\tilde{x}, \gamma(\tilde{x}))dt\right |\leqslant C_{1}\mathrm{exp}(C_{2}/\epsilon)\lambda^{\frac{n-1}{2}}, \\\forall\gamma\neq\mathrm{Identity}.
	\end{multline}
	Because $(\tilde{M}, \tilde{g})$ is a Riemannian manifold without conjugate points, these two inequalities can be proved using the Hadamard parametrix for $\partial_{t}^{2}-\Delta_{g}$. In fact, \eqref{sum1} and \eqref{sum2} are just (3.6.8) and (3.6.9) respectively in \cite{SoggeHangzhou}[Section 3.6]. This concludes our proof of Theorem \ref{Thm8}.
	
\end{proof}

\vspace{1ex}
	
	\begin{proof}[Proof of Corollary \ref{Cor2}] Since our proof of \eqref{critical1} does not rely on the specific type of compact manifold we worked on, it is true for any compact manifold. With this in mind, Corollary \ref{Cor2} follows by interpolating \eqref{critical1} with \eqref{nonpmain} and applying the duality principle as we did for the proof of Corollary \ref{Cor1}.
\end{proof}

	\vspace{0.5cm}
	
	\bibliography{OrthonormalSystemsV3}
	
	\bibliographystyle{plain}
	
\end{document}